\documentclass{article}
\usepackage{arxiv}
\usepackage[T1]{fontenc}
\usepackage[english]{babel}
\usepackage{microtype}
\usepackage{amsmath,amssymb,amsthm,mathtools,aliascnt}
\usepackage{booktabs,tabularx,array,multirow}
\usepackage{graphicx,xcolor}
\DeclareRobustCommand{\rev}[1]{#1}
\usepackage{float,needspace}
\usepackage{enumitem}
\usepackage[colorlinks=true,allcolors=blue]{hyperref}
\usepackage[nameinlink,capitalise,noabbrev]{cleveref}
\allowdisplaybreaks
\newtheorem{theorem}{Theorem}[section]
\newaliascnt{lemma}{theorem}
\newtheorem{lemma}[lemma]{Lemma}
\aliascntresetthe{lemma}
\Crefname{lemma}{Lemma}{Lemmas}
\newaliascnt{proposition}{theorem}
\newtheorem{proposition}[proposition]{Proposition}
\aliascntresetthe{proposition}
\Crefname{proposition}{Proposition}{Propositions}
\newaliascnt{corollary}{theorem}

\aliascntresetthe{corollary}
\Crefname{corollary}{Corollary}{Corollaries}
\theoremstyle{definition}
\newaliascnt{assumption}{theorem}
\newtheorem{assumption}[assumption]{Assumption}
\aliascntresetthe{assumption}
\Crefname{assumption}{Assumption}{Assumptions}
\newaliascnt{example}{theorem}
\newtheorem{example}[example]{Example}
\aliascntresetthe{example}
\Crefname{example}{Example}{Examples}
\theoremstyle{remark}
\newaliascnt{remark}{theorem}
\newtheorem{remark}[remark]{Remark}
\aliascntresetthe{remark}
\Crefname{remark}{Remark}{Remarks}

\numberwithin{equation}{section}
\hypersetup{
    pdftitle={Numerical methods for nonlinear stochastic differential equations driven by subdiffusion}
}

\title{Numerical methods for nonlinear stochastic differential equations driven by subdiffusion}
\author{Qian Guo \quad Wei Liu \quad Ruchun Zuo\thanks{Corresponding author. Email: \href{mailto:ruchunzuo@outlook.com}{ruchunzuo@outlook.com}.}\\[0.5em]
{\small Department of Mathematics, Shanghai Normal University}\\
{\small Shanghai 200234, China}}
\date{}

\begin{document}
	\maketitle
	
\begin{abstract}
Stochastic differential equations (SDEs) driven by subdiffusion see great applications in anomalous transport and asset price dynamics.
Trusted numerical methods are essential for their application in practice. 
In this paper, for SDEs driven by subdiffusion with superlinear coefficients, we propose a brand-new explicit numerical method. 
The sharp convergence rate is proved with new techniques. 
The potential application of the proposed method in numerically solving a generalized Fokker--Planck equation is also illustrated. 
Numerical simulations are provided to demonstrate the theoretical results.
\end{abstract}

\noindent\textbf{Keywords:} Subdiffusion, stochastic differential equation, superlinear coefficient, explicit numerical method, subordinator grid, strong convergence.

\medskip
\noindent\textbf{Mathematics Subject Classification (2020):} 65C30, 60H10, 60H35.

\section{Introduction}\label{sec:intro}

For stochastic differential equations (SDEs) driven by subdiffusion, numerical methods based on subordinator-grid have significant advantages on the computational cost, as they avoid the costly approximation of the inverse subordinator. 
However,  a long-standing limitation of such methods is their reliance on the assumption that drifts integrated against the time variable should not depend on the state. 
Therefore, it is a major challenge whether this type of methods can be extended to the case of state-dependent drifts.
In this paper, we give a positive answer by developing an explicit numerical framework that handles state-dependent dynamics driven by both the time variable and the subdiffusion process.
The proposed scheme preserves the defining efficiency of subordinator-grid algorithms and achieves a sharp strong convergence order.

In applications, anomalous subdiffusion arises in systems with trapping and heavy-tailed waiting times and can be modeled by scaling limits of continuous-time random walks~\cite{MetzlerKlafter2000,MeerschaertScheffler2004}.
Stochastic representations of fractional diffusion and Fokker--Planck equations (FPEs) have been established for both stable and more general infinitely divisible waiting times~\cite{MagdziarzWeronWeron2007,MagdziarzWeronKlafter2008,Magdziarz2009Langevin}, and pathwise simulations offer an effective probabilistic tool for computing their solutions~\cite{CarnaffanKawai2017}.
In particular, subdiffusion models with external forces have been studied in which the drift is integrated with respect to the time variable $t$ or the inverse subordinator $E(t)$~\cite{EuleFriedrich2009,CairoliBaule2015,ChenWangDeng2019}.
Both types of state-dependent drift appear together in the subdiffusive Black--Scholes model proposed in~\cite{zhang2025subdiffusive}.
Motivated by these applications, in this paper we consider nonautonomous SDEs driven by subdiffusion of the form
\begin{equation}\label{eq:intro-model}
	\mathrm{d}X(t) = h(t, E(t), X(t)) \mathrm{d}t + f(t, E(t), X(t)) \mathrm{d}E(t) + g(t, E(t), X(t)) \mathrm{d}B(E(t)),
\end{equation}
where $B$ is a standard Brownian motion, $E$ is the inverse of a subordinator $D$ independent of $B$, and the coefficients satisfy appropriate monotonicity and growth conditions.
Here subdiffusion refers to the time-changed Brownian motion $B(E(t))$.

Now, we do a brief review of exsiting literatures about techniques dealing with the super-linearity and numerical methods for SDE driven subdiffusion.  

Because explicit solutions to the nonlinear SDE~\eqref{eq:intro-model} are rarely available, numerical schemes for approximating its sample paths are essential.
Even when closed-form solutions exist, path simulations remain useful to generate sample trajectories for statistical analysis and hypothesis testing.
However, for classical SDEs, strong convergence beyond global Lipschitz conditions was established under suitable moment bounds in~\cite{HighamMaoStuart2002}, whereas the standard explicit Euler--Maruyama (EM) scheme is known to diverge in finite time under superlinear growth~\cite{HutzenthalerJentzenKloeden2011}.
To preserve the low computational cost of explicit time-stepping, various modified explicit schemes have been developed, including tamed Euler methods~\cite{HutzenthalerJentzenKloeden2012,Sabanis2016} and truncated EM methods~\cite{Mao2015,Mao2016,LiMaoYin2019}.

Beyond the challenge of superlinear growth, simulating these SDEs requires resolving the distinct dynamics driven by $\mathrm{d}t$ and $\mathrm{d}E(t)$.
To address these difficulties, existing numerical schemes for SDEs driven by subdiffusion fall into three main strategies.
The first strategy applies the duality principle~\cite{Kobayashi2011} to SDEs driven by subdiffusion of the form
\begin{equation*}
\mathrm{d}X(t)=f(E(t),X(t)) \mathrm{d}E(t)+g(E(t),X(t)) \mathrm{d}B(E(t)).
\end{equation*}
This principle represents the solution as a classical SDE solution evaluated at $E(t)$.
In this direction, strong and weak convergence of the EM scheme under global Lipschitz conditions was established in~\cite{JumKobayashi2016}, and a semi-implicit EM method for SDEs with superlinear drift and H\"older continuous time dependence was developed in~\cite{DengLiu2020}.
More recently, strong convergence of order $1/2$ in $L^2$ for backward and projected Euler methods with superlinear drift and diffusion coefficients was obtained in~\cite{WangNiuZhang2026}.
For time-changed L\'evy noise, strong and weak convergence rates of numerical methods were also studied in~\cite{ChenLiuWu2026}.
The advantage of this strategy is that classical SDE convergence results transfer directly through the time change.
However, the standard duality principle does not directly apply to general $t$-dependent coefficients or equations with a drift integrated against $t$.

The second strategy constructs numerical schemes on the random subordinator grid $\tau_i = D(i\delta)$ ($D$-grid), where $\delta > 0$ is the sampling step size and $E(\tau_i) = i\delta$.
For SDEs driven by subdiffusion of the form
\begin{equation*}
\mathrm{d}X(t) = f(t, X(t)) \mathrm{d}E(t) + g(t, X(t)) \mathrm{d}B(E(t)),
\end{equation*}
an EM scheme was constructed directly on the $D$-grid under global Lipschitz assumptions in~\cite{JinKobayashi2019}.
This construction does not rely on the duality principle.
For SDEs with superlinear coefficients and no drift integrated against $t$, truncated EM schemes were developed on the $D$-grid in~\cite{LiuMaoTangWu2020,LiLiuTang2025}, and a Milstein-type approximation was proposed in~\cite{WuZuo2026}.
More closely related to the present model, a $D$-grid EM scheme was studied in~\cite{JinKobayashi2021} for SDEs with a drift integrated against $t$ and another against $E$ of the form
\begin{equation*}
\mathrm{d}X(t) = h(E(t)) \mathrm{d}t + f(E(t), X(t)) \mathrm{d}E(t) + g(E(t), X(t)) \mathrm{d}B(E(t)),
\end{equation*}
and shown to converge strongly with order $1/2$ with respect to the subordinator step size $\delta$.
The main advantage of the $D$-grid is that it avoids inverse-subordinator evaluations and permits the direct simulation of Brownian increments over intervals of length $\delta$.
Furthermore, for a related McKean--Vlasov SDE driven by subdiffusion whose drift integrated against $t$ also depends only on $E(t)$, strong approximation using interacting particle systems was studied in~\cite{WenLiXu2023}.
However, as noted in~\cite{JinKobayashi2021}, the analysis does not directly extend when $h$ depends on $X(t)$, because reducing the subordinator step size $\delta$ does not force the lengths of the random intervals $[\tau_i, \tau_{i+1})$ generated by subordinator jumps to vanish.
Indeed, a counterexample in~\cite{ZuoGuo2026TCSDDE} shows that even for the linear equation $\mathrm{d}X(t) = X(t) \mathrm{d}t$, the pure $D$-grid approximation fails to converge as $\delta \to 0$.

The third strategy constructs numerical schemes on an equidistant time grid $t_n = n\Delta$.
For SDEs driven by subdiffusion generated by an inverse $\alpha$-stable subordinator with $\alpha\in(0,1)$ and without a drift integrated against $t$, an $L^1$ convergence rate of $\alpha$ was established for the backward EM scheme with additive noise in~\cite{LiuWangZuo2025}.
Under multiplicative noise, rates close to $\alpha/2$ were obtained in~\cite{Zuo2026} for the standard and truncated EM schemes.
Building on this equidistant framework, an equidistant truncated EM (ETEM) scheme was proposed in~\cite{ZuoGuo2026} for nonautonomous superlinear SDEs with a state-dependent drift integrated against $t$, as in~\eqref{eq:intro-model}.
Its main advantage is that every time step has deterministic length $\Delta$, so the state-dependent drift integrated against $t$ is resolved directly, yielding a strong convergence rate determined by $\alpha/2$ and the temporal H\"older exponents of the coefficients.
Nevertheless, this equidistant approach has two main limitations.
First, the schemes in~\cite{LiuWangZuo2025,Zuo2026,ZuoGuo2026} require values of the inverse subordinator $E(t_n)$ at the uniform time-grid points.
In a fully discrete implementation, these values must be approximated from samples of $D$ with step size $\delta$.
The resulting approximation error was analyzed in~\cite{ZuoGuo2026TCSDDE} for superlinear stochastic delay differential equations driven by subdiffusion.
Second, for the equidistant schemes considered above, the strong convergence rates in $\Delta$ depend on the index $\alpha$ and the temporal regularity of the coefficients.
Indeed, for the fully discrete ETEM scheme studied in~\cite{ZuoGuo2026TCSDDE}, a multiplicative-noise example yields a lower bound of order $\Delta^{\alpha/2}+\delta^{1/2}$, establishing the sharpness of these polynomial exponents over the class of equations considered there.

Building on the second strategy, we develop an explicit numerical method for nonautonomous superlinear SDEs driven by subdiffusion with a state-dependent drift integrated against $t$.
Compared with pure $D$-grid schemes and the equidistant methods in~\cite{ZuoGuo2026,ZuoGuo2026TCSDDE}, the proposed method offers two main advantages.
First, it completely avoids evaluating or approximating the inverse subordinator $E(t_n)$.
Second, for general subordinators, the method achieves a strong convergence order of $1/2$ in both discretization parameters when the coefficients are at least $1/2$-H\"older continuous in both time arguments, compared with the $\alpha/2$ rate discussed above for equidistant schemes driven by inverse $\alpha$-stable subordinators.
For coefficients with lower temporal regularity, explicit sharpened rates are established via the Laplace exponent.

The remainder of the paper is organized as follows.
Section~\ref{sec:model} introduces the assumptions and useful lemmas.
Section~\ref{sec:method} constructs the numerical method and establishes its moment bounds.
Section~\ref{sec:convergence} proves the strong convergence rates for general subordinators and the sharpened rates under a growth condition on the Laplace exponent.
Section~\ref{sec:numerical} presents numerical experiments that confirm the theoretical convergence rates and demonstrate the application of the method to a generalized FPE.
Section~\ref{sec:conclusion} concludes the paper.

\section{Notations and properties of the underlying equation}\label{sec:model}

Let $T>0$, $d,m\in\mathbb N$, and $x_0\in\mathbb R^d$ be deterministic.
Let $(\Omega_B,\mathcal F_B,\mathbb P_B)$ and $(\Omega_D,\mathcal F_D,\mathbb P_D)$ carry an $m$-dimensional standard Brownian motion $B$ and a subordinator $D$, respectively.
We work on the completed product probability space $(\Omega,\mathcal F,\mathbb P)$ with $\mathbb P=\mathbb P_B\otimes\mathbb P_D$, so $B$ and $D$ are independent.
We write $|\cdot|$ for the Euclidean norm, $\|\cdot\|$ for the Frobenius norm, and $\langle\cdot,\cdot\rangle$ for the Euclidean inner product.
The superscript $\top$ denotes transposition.
In estimates stated simultaneously for vector- and matrix-valued coefficients, $|\cdot|$ denotes the corresponding norm.
The notation $\mathbb E_B$ denotes integration over the Brownian coordinate with the path of $D$ fixed, and $\mathbb E_D$ denotes integration over the subordinator coordinate.
Thus $\mathbb E[\cdot]=\mathbb E_D[\mathbb E_B[\cdot]]$.
	
The subordinator $D=(D(u))_{u\ge0}$ has zero drift, no killing, $D(0)=0$, and infinite L\'evy measure $\nu(0,\infty)=\infty$.
The Laplace transform of $D$ is given by
\begin{equation*}
	\mathbb E[e^{-\lambda D(u)}] = e^{-u\Phi(\lambda)}, \qquad \Phi(\lambda) = \int_{(0,\infty)} (1-e^{-\lambda z}) \nu(\mathrm dz),
\end{equation*}
where the Laplace exponent $\Phi(\lambda)\to\infty$ as $\lambda\to\infty$.
The inverse subordinator $E = (E(t))_{t \ge 0}$ is defined as the first passage time
\begin{equation*}
	E(t) = \inf\{u \ge 0 : D(u) > t\}, \qquad t \ge 0.
\end{equation*}
Because $D$ is strictly increasing and unbounded almost surely, $E$ has continuous and nondecreasing sample paths with $E(0)=0$.
A standard example is the $\alpha$-stable subordinator with $\Phi(\lambda) = \lambda^\alpha$, $0 < \alpha < 1$.
Properties of its inverse and connections with time-fractional equations are reviewed in \cite{MeerschaertStraka2013}.
Tempered stable and Gamma subordinators are also covered by our analysis whenever their sampled increments are available.
For properties of inverse tempered stable and inverse Gamma subordinators, see \cite{KumarVellaisamy2015,ColantoniDOvidio2023}.

For martingale properties and stochastic calculus for Brownian motion evaluated at inverse subordinators, see \cite{Kobayashi2011,MagdziarzSchilling2015}.
Under $\mathbb P_B$, $E$ is deterministic and $B(E(t))$ is a continuous martingale with quadratic covariation matrix $E(t)I_m$.
Adaptedness and predictability are understood with respect to the usual augmentation of the Brownian history up to $E(t)$, with the path of $D$ fixed.
To combine the $\mathrm dt$ and $\mathrm dE(t)$ contributions, define the strictly increasing function $A(t)=t+E(t)$.

The SDE~\eqref{eq:intro-model} is written in integral form as
\begin{equation}\label{eq:model}
	X(t) = x_0 + \int_0^t h(s, E(s), X(s)) \mathrm ds + \int_0^t f(s, E(s), X(s)) \mathrm dE(s) + \int_0^t g(s, E(s), X(s)) \mathrm dB(E(s)),
\end{equation}
for $t \in [0, T]$, where the drift coefficients $h, f\colon [0, T] \times [0, \infty) \times \mathbb R^d \to \mathbb R^d$ and the diffusion coefficient $g\colon [0, T] \times [0, \infty) \times \mathbb R^d \to \mathbb R^{d \times m}$ are Borel measurable functions.
Here $h$ is the coefficient of the drift integrated against $t$, and $f$ is the coefficient of the drift integrated against $E$.
The coefficients may exhibit superlinear growth with respect to the state variable.
We impose the following structural conditions, where all constants are uniform in $t \in [0, T]$ and $u \in [0, \infty)$.

\begin{assumption}\label{ass:space}
	There exist constants $L_1 > 0$ and $\ell \ge 0$ such that, for all $\varphi \in \{h, f, g\}$, $t \in [0, T]$, $u \ge 0$, and $x, y \in \mathbb R^d$,
	\begin{equation*}
		|\varphi(t, u, x) - \varphi(t, u, y)| \le L_1 (1 + |x|^\ell + |y|^\ell) |x - y|, \qquad |\varphi(t, u, 0)| \le L_1.
	\end{equation*}
	In particular, $|\varphi(t, u, x)| \le C(1 + |x|^{\ell+1})$ for all $(t, u, x)$.
\end{assumption}

\begin{assumption}\label{ass:time}
	There exist constants $L_2 > 0$ and regularity exponents $\beta_{\varphi,t}, \beta_{\varphi,u} \in (0, 1]$, for $\varphi \in \{h, f, g\}$, such that for all $s, t \in [0, T]$, $u, v \ge 0$, and $x \in \mathbb R^d$,
	\begin{equation*}
		|\varphi(t, u, x) - \varphi(s, v, x)| \le L_2 (1 + |x|^{\ell+1}) \bigl(|t - s|^{\beta_{\varphi,t}} + |u - v|^{\beta_{\varphi,u}}\bigr).
	\end{equation*}
	For notational convenience, we set $\beta_t = \min_{\varphi \in \{h,f,g\}} \beta_{\varphi,t}$ and $\beta_u = \min_{\varphi \in \{h,f,g\}} \beta_{\varphi,u}$.
\end{assumption}

\begin{assumption}\label{ass:mono}
	There exist constants $L_3 > 0$ and $q > 2$ such that for all $t \in [0, T]$, $u \ge 0$, and $x, y \in \mathbb R^d$,
	\begin{align*}
		2\langle x - y, h(t, u, x) - h(t, u, y)\rangle &\le L_3 |x - y|^2, \\
		2\langle x - y, f(t, u, x) - f(t, u, y)\rangle + (q - 1) \|g(t, u, x) - g(t, u, y)\|^2 &\le L_3 |x - y|^2.
	\end{align*}
\end{assumption}

\begin{assumption}\label{ass:coerc}
	There exist a finite real exponent $P > 2$ and a constant $L_4 > 0$ such that for all $t \in [0, T]$, $u \ge 0$, and $x \in \mathbb R^d$,
	\begin{align*}
		2\langle x, h(t, u, x)\rangle &\le L_4 (1 + |x|^2), \\
		2\langle x, f(t, u, x)\rangle + (P - 1) \|g(t, u, x)\|^2 &\le L_4 (1 + |x|^2).
	\end{align*}
\end{assumption}

Assumption~\ref{ass:coerc} requires coercivity up to a finite order $P$.
Existence of moments of all orders is not assumed.
The numerical moment and convergence estimates below impose additional lower bounds on $P$.
Throughout this paper, $C$ denotes a generic positive constant that may depend on the structural parameters, the chosen moment orders, $T,d,m,x_0$, and the law of $D$, but is independent of the discretization parameters.

General moment formulas and tail estimates for inverse subordinators are given in \cite{Lageras2005}.
The following exponential integrability estimate follows from \cite{JumKobayashi2016} by absorbing polynomial factors into the exponential.

\begin{lemma}\label{lem:inverse-moments}
	For every $c > 0$ and $a \ge 0$, $\mathbb E[(1 + E(T))^a e^{c E(T)}] < \infty$.
\end{lemma}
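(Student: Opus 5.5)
The plan is to reduce the claim to the exponential moment bound $\mathbb E[e^{c'E(T)}]<\infty$ for every $c'>0$, which is the content of the result in \cite{JumKobayashi2016}. The polynomial factor is absorbed into a slightly larger exponential. Since $E(T)\ge0$, for any $a\ge0$ and $\varepsilon>0$ the function $x\mapsto(1+x)^a e^{-\varepsilon x}$ is bounded on $[0,\infty)$ by a constant $C_{a,\varepsilon}$. This gives
\begin{equation*}
	(1+E(T))^a e^{cE(T)} \le C_{a,\varepsilon}\, e^{(c+\varepsilon)E(T)},
\end{equation*}
and it suffices to show $\mathbb E[e^{\lambda E(T)}]<\infty$ for every $\lambda>0$.

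For the exponential moment I would argue directly, so as not to depend on the precise form quoted from \cite{JumKobayashi2016}. The tool is the duality $\{E(T)>u\}=\{D(u)\le T\}$, which follows from the definition of $E$ as a first passage time together with the strict monotonicity of $D$. For every $\theta>0$, Markov's inequality applied to $e^{-\theta D(u)}$ yields
\begin{equation*}
	\mathbb P(E(T)>u)\le \mathbb P\bigl(e^{-\theta D(u)}\ge e^{-\theta T}\bigr)\le e^{\theta T}e^{-u\Phi(\theta)}.
\end{equation*}
By the layer-cake formula,
\begin{equation*}
	\mathbb E[e^{\lambda E(T)}]=1+\int_0^\infty \lambda e^{\lambda u}\,\mathbb P(E(T)>u)\,\mathrm du\le 1+\lambda e^{\theta T}\int_0^\infty e^{-(\Phi(\theta)-\lambda)u}\,\mathrm du .
\end{equation*}
The last integral is finite as soon as $\Phi(\theta)>\lambda$.

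The only point needing care is the choice of $\theta$. It is available because $\Phi(\lambda)\to\infty$ as $\lambda\to\infty$, which holds under the standing assumption that the Lévy measure is infinite. Pick $\theta$ large enough that $\Phi(\theta)>c+\varepsilon$, with $\varepsilon=1$ say. Combining this with the first display gives the claim for all $c>0$ and $a\ge0$. I do not expect a real obstacle. The one subtlety is that the duality identity needs the non-strict inequality $D(u)\le T$, but any version of it suffices, since the Laplace-transform bound applies equally to $\{D(u)<T\}$ and $\{D(u)\le T\}$.
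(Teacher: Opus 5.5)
Your proof is correct and essentially matches the paper's. The paper absorbs the polynomial factor into a slightly larger exponential and cites \cite{JumKobayashi2016} for $\mathbb E[e^{cE(T)}]<\infty$; you instead prove that exponential moment directly from the tail bound $\mathbb P(E(T)>u)\le e^{\theta T-u\Phi(\theta)}$, which is the same estimate the paper uses in the proof of Lemma~\ref{lem:stable-blocks}. One small correction: for a strictly increasing, right-continuous $D$ the exact identity is $\{E(T)>u\}=\{D(u)<T\}$, since on $\{D(u)=T\}$ one has $E(T)=u$, but your bound only needs the inclusion $\{E(T)>u\}\subseteq\{D(u)\le T\}$, and that inclusion holds.
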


For related proofs of well-posedness and moment estimates for the exact solution, see \cite{ZuoGuo2026,ZuoGuo2026TCSDDE}.

\begin{proposition}\label{prop:exact}
	Under Assumptions~\ref{ass:space}, \ref{ass:time}, and~\ref{ass:coerc}, the SDE~\eqref{eq:model} admits a unique continuous nonexplosive strong solution $X(t)$ on $[0, T]$.
	Moreover, for every $0 < r < P$,
	\begin{equation*}
		\mathbb E_B\left[\sup_{0 \le t \le T} (1 + |X(t)|^2)^{\frac{r}{2}}\right] \le C e^{C(T + E(T))}.
	\end{equation*}
	Consequently, $\mathbb E[\sup_{0 \le t \le T} |X(t)|^r] < \infty$.
\end{proposition}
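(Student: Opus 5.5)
We argue pathwise in $D$: fix a path of the subordinator, so that $E$ is a deterministic continuous nondecreasing function with $E(T)<\infty$ for $\mathbb P_D$-almost every path. Under $\mathbb P_B$, $B(E(\cdot))$ is then a continuous martingale with quadratic variation $E(t)I_m$. Equation~\eqref{eq:model} becomes a classical SDE driven by the continuous semimartingale pair $(A,B\circ E)$, where $A(t)=t+E(t)$. The coefficients are locally Lipschitz in $x$ by Assumption~\ref{ass:space}, and measurable and locally bounded in $(t,u)$ by Assumptions~\ref{ass:space} and~\ref{ass:time}. Standard theory for SDEs driven by continuous semimartingales with locally Lipschitz coefficients (e.g. Protter's results) then yields a unique maximal strong solution up to an explosion time $\zeta$. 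The construction uses truncation of the coefficients at level $|x|\le R$ and the stopping times $\theta_R=\inf\{t:|X(t)|\ge R\}$. Nonexplosion, meaning $\theta_R\to\infty$ (i.e. $\theta_R>T$ eventually), will follow from the moment bound below. Measurability in the $D$ coordinate is handled by noting that the solution is a measurable functional of $(B,D)$, which is standard for the product space.

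For the moment estimate, fix $0<r<P$. Apply Itô's formula to $V(x)=(1+|x|^2)^{p/2}$ with $p$ chosen in $[2,P)$, $p\ge r$, stopped at $\theta_R$. The $\mathrm dt$ part contributes $\tfrac p2 V^{(p-2)/p}\,2\langle x,h\rangle\le C V$ by the first line of Assumption~\ref{ass:coerc}. The $\mathrm dE$ part, together with the Itô correction, contributes
\[
\tfrac p2 (1+|x|^2)^{\frac{p-2}{2}}\Bigl(2\langle x,f\rangle+\|g\|^2\Bigr)+\tfrac{p(p-2)}{2}(1+|x|^2)^{\frac{p-4}{2}}|x^\top g|^2 .
\]
This is bounded by $\tfrac p2(1+|x|^2)^{\frac{p-2}{2}}\bigl(2\langle x,f\rangle+(p-1)\|g\|^2\bigr)\le C V$, using the second line of Assumption~\ref{ass:coerc} since $p-1\le P-1$. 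Hence
\[
V(X(t\wedge\theta_R))\le V(x_0)+C\int_0^{t\wedge\theta_R}V(X(s))\,\mathrm dA(s)+M(t\wedge\theta_R),
\]
where $M$ is a local martingale with $\mathrm d\langle M\rangle\le C V^{2}\,\mathrm dE$. For $p$ slightly larger than needed, this uses the bound $|x^\top g|^2\le |x|^2\|g\|^2\le C(1+|x|^2)\cdot\bigl(L_4(1+|x|^2)-2\langle x,f\rangle\bigr)/(P-1)$. I would avoid this route, however, and control $M$ via Burkholder--Davis--Gundy applied only to the supremum.

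The supremum estimate is the main obstacle, because of the superlinear $g$ in the BDG term. First take expectations without the supremum and apply Gronwall's inequality with respect to the increasing function $A$. This gives $\mathbb E_B V(X(t\wedge\theta_R))\le V(x_0)e^{CA(T)}$; letting $R\to\infty$ then yields nonexplosion, since $\mathbb P_B(\theta_R\le T)\le e^{CA(T)}/R^p$. For the supremum with exponent $r<p$, I would not apply BDG to $V$ directly. Instead I apply Itô's formula to $\log V$, or equivalently use the Lenglart domination inequality: the nonnegative process $V(X(\cdot\wedge\theta_R))e^{-CA(\cdot)}$ is a local supermartingale by the drift bound, and the maximal inequality for such processes gives $\mathbb E_B\sup_t Y_t^{r/p}\le C_{r/p}\,Y_0^{r/p}$ for $r<p$. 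This yields $\mathbb E_B\sup_{t\le T}(1+|X(t)|^2)^{r/2}\le Ce^{C(T+E(T))}$, with Fatou's lemma used to pass $R\to\infty$. This is exactly why the statement requires the strict inequality $r<P$.

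Finally, integrating over $D$ gives $\mathbb E\sup_{t\le T}|X(t)|^r\le C\,\mathbb E_D e^{CE(T)}<\infty$ by Lemma~\ref{lem:inverse-moments}. Assumption~\ref{ass:mono} is not needed here; pathwise uniqueness comes from local Lipschitz continuity alone.
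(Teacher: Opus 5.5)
Your argument is correct and is essentially the paper's route. The paper defers the proof of this proposition to its references, but its proof of Lemma~\ref{lem:nummom} uses exactly your mechanism: coercivity from Assumption~\ref{ass:coerc} makes $e^{-CA(t)}(1+|X(t)|^2)^{P/2}$ a nonnegative supermartingale, and then Doob's maximal inequality is combined with integrating the tail at exponent $r/P<1$. You may simply take $p=P$ rather than choosing $p\in[2,P)$ with $p>r$.

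Do delete the unused claim $\mathrm d\langle M\rangle\le CV^2\,\mathrm dE$, which is false for superlinear $g$. Assumption~\ref{ass:coerc} only bounds $\|g\|^2$ by $(L_4(1+|x|^2)-2\langle x,f\rangle)/(P-1)$. For instance, $f=-x^5$ and $g=x^2$ satisfy the assumption, yet $|x^\top g|^2=x^6$ is not $O((1+|x|^2)^2)$.
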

	
To evaluate the convergence rate of numerical approximations, we establish a stability estimate.
Suppose an adapted continuous process $\bar{X}$ satisfies the perturbed SDE
\begin{align*}
	\bar{X}(t) &= x_0 + \int_0^t \bigl(h(s, E(s), \bar{X}(s)) - \mathcal R_h(s)\bigr) \mathrm ds + \int_0^t \bigl(f(s, E(s), \bar{X}(s)) - \mathcal R_f(s)\bigr) \mathrm dE(s) \\
	&\quad + \int_0^t \bigl(g(s, E(s), \bar{X}(s)) - \mathcal R_g(s)\bigr) \mathrm dB(E(s)),
\end{align*}
driven by predictable residual processes $\mathcal R_h, \mathcal R_f$, and $\mathcal R_g$.
For $p \ge 2$, set
\begin{equation}\label{eq:forcing}
	\mathcal I(t) = \int_0^t \mathbb E_B\left[|\mathcal R_h(s)|^p\right]^{\frac{2}{p}} \mathrm ds + \int_0^t \left(\mathbb E_B\left[|\mathcal R_f(s)|^p\right]^{\frac{2}{p}} + \mathbb E_B\left[\|\mathcal R_g(s)\|^p\right]^{\frac{2}{p}}\right) \mathrm dE(s).
\end{equation}

\begin{lemma}\label{lem:stability}
	Under Assumptions~\ref{ass:space}--\ref{ass:coerc}, let $2 \le p < q$.
	If $\mathcal I(T) < \infty$ almost surely, then
	\begin{equation*}
		\mathbb E_B\left[\sup_{0 \le t \le T} |X(t) - \bar{X}(t)|^p\right] \le C e^{C(T+E(T))} \mathcal I(T)^{\frac{p}{2}}.
	\end{equation*}
\end{lemma}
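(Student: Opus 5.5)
Fix the path of $D$ and work under $\mathbb P_B$, where $E$ is a deterministic continuous nondecreasing function and $B(E(\cdot))$ is a continuous martingale with quadratic variation $E(t)I_m$. Set $e(t)=X(t)-\bar X(t)$, which satisfies
\begin{equation*}
	e(t)=\int_0^t \bigl(\Delta h(s)+\mathcal R_h(s)\bigr)\mathrm ds+\int_0^t \bigl(\Delta f(s)+\mathcal R_f(s)\bigr)\mathrm dE(s)+\int_0^t \bigl(\Delta g(s)+\mathcal R_g(s)\bigr)\mathrm dB(E(s)),
\end{equation*}
where $\Delta\varphi(s)=\varphi(s,E(s),X(s))-\varphi(s,E(s),\bar X(s))$. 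The plan is to apply It\^o's formula to $|e(t)|^p$ (time-changed semimartingale calculus as in \cite{Kobayashi2011}). This gives a $\mathrm ds$-integral with integrand $p|e|^{p-2}\langle e,\Delta h+\mathcal R_h\rangle$, and a $\mathrm dE$-integral with integrand
\begin{equation*}
	p|e|^{p-2}\langle e,\Delta f+\mathcal R_f\rangle+\tfrac{p(p-1)}{2}|e|^{p-2}\|\Delta g+\mathcal R_g\|^2 .
\end{equation*}
There is also a local martingale term $M(t)=p\int_0^t|e|^{p-2}\langle e,(\Delta g+\mathcal R_g)\,\mathrm dB(E(s))\rangle$.

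\textbf{Pointwise bounds on the drifts.} Since $p<q$, pick $\epsilon>0$ with $(1+\epsilon)(p-1)\le q-1$ and split $\|\Delta g+\mathcal R_g\|^2\le(1+\epsilon)\|\Delta g\|^2+C_\epsilon\|\mathcal R_g\|^2$. Assumption~\ref{ass:mono} then bounds the monotone parts by $\frac{p}{2}L_3|e|^p$ in both the $\mathrm ds$- and $\mathrm dE$-integrands. The residual terms are handled by Young's inequality:
\begin{equation*}
	|e|^{p-1}|\mathcal R|\le|e|^p+|e|^{p-2}|\mathcal R|^2, \qquad |e|^{p-2}|\mathcal R|^2\le \eta|e|^p+C_\eta|\mathcal R|^p .
\end{equation*}
The second form gives $|\mathcal R|^p$, whereas $\mathcal I$ involves $\mathbb E_B[|\mathcal R|^p]^{2/p}$. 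To get the right power I will keep the term as $|e|^{p-2}|\mathcal R|^2$ and, after taking $\mathbb E_B[\sup\cdot]$, apply H\"older:
\begin{equation*}
	\mathbb E_B\bigl[\sup|e|^{p-2}\,|\mathcal R(s)|^2\bigr]\le \mathbb E_B\bigl[\sup|e|^p\bigr]^{1-2/p}\,\mathbb E_B\bigl[|\mathcal R(s)|^p\bigr]^{2/p}.
\end{equation*}
Integrating gives $\mathbb E_B[\sup|e|^p]^{1-2/p}\,\mathcal I(T)$, and Young with exponents $p/(p-2)$ and $p/2$ turns this into $\frac14\mathbb E_B[\sup|e|^p]+C\mathcal I(T)^{p/2}$.

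\textbf{Martingale term.} Apply Burkholder--Davis--Gundy to $M$ under $\mathbb P_B$. Its quadratic variation is bounded by
\begin{equation*}
	\int_0^t|e|^{2p-2}\,\|\Delta g+\mathcal R_g\|^2\,\mathrm dE .
\end{equation*}
The superlinear $\Delta g$ is the \textbf{main obstacle}: it is not controlled by $|e|$ without moment weights. I would avoid this by localizing. Use stopping times $\theta_R=\inf\{t:|X(t)|\vee|\bar X(t)|\vee\mathcal I(t)\ge R\}$, which tend to $T$ as $R\to\infty$ because both processes are continuous and $\mathcal I(T)<\infty$. Work with $\sup_{s\le t\wedge\theta_R}$, so all quantities are finite. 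Then extract $\frac14\mathbb E_B\sup|e|^p$ via
\begin{equation*}
	\sup|e|^{p/2}\Bigl(\int|e|^{p-2}\|\Delta g\|^2\mathrm dE\Bigr)^{1/2}\le \tfrac14\sup|e|^p+C\int|e|^{p-2}\|\Delta g\|^2\,\mathrm dE .
\end{equation*}
The remaining $\|\Delta g\|^2$ term must be absorbed using the monotonicity surplus $(q-1)-(1+\epsilon)(p-1)$. Concretely, I would use the margin to control $C\|\Delta g\|^2\le C(L_3|e|^2-2\langle e,\Delta f\rangle)$ and then handle the $\langle e,\Delta f\rangle$ part as well. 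If this closes poorly, the fallback is the standard two-step route. First bound $\sup_t\mathbb E_B|e(t\wedge\theta_R)|^p$ without the supremum: the martingale vanishes in expectation, so only the drift estimates above are needed. Then recover the supremum from the BDG estimate, using that the integrated $\|\Delta g\|^2$ term is controlled by the expectation-level inequality itself, since monotonicity yields $\int\|\Delta g\|^2|e|^{p-2}$ as a dissipated quantity when $p<q$.

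\textbf{Gronwall in $A(t)=t+E(t)$.} Collect the bound for $y(t)=\mathbb E_B\sup_{s\le t\wedge\theta_R}|e|^p$:
\begin{equation*}
	y(t)\le C\int_0^t y(s)\,\mathrm dA(s)+C\,\mathcal I(T)^{p/2}.
\end{equation*}
Gronwall's inequality for the Stieltjes measure of the continuous increasing function $A$ gives $y(T)\le Ce^{CA(T)}\mathcal I(T)^{p/2}=Ce^{C(T+E(T))}\mathcal I(T)^{p/2}$. Finally let $R\to\infty$ by Fatou's lemma.
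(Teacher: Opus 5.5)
Your fallback route is the paper's proof: localize at $\sigma_n$, pick $\epsilon$ with a \emph{strict} surplus so that the term $\frac{p(q-p)}{4}\int|e|^{p-2}\|\Delta g\|^2\,\mathrm dE$ stays on the left (the paper takes $\varepsilon=(q-p)/(2(p-1))$; you need $(1+\epsilon)(p-1)<q-1$, not $\le$), take $\mathbb E_B$ so the martingale drops, use H\"older on the residuals and Gronwall in $A$, then read off that dissipated term from the same inequality, feed it into BDG, and finish with Fatou. Your first ``concrete'' idea, bounding $\|\Delta g\|^2$ pathwise by $C(L_3|e|^2-2\langle e,\Delta f\rangle)$, does not close on its own, because $-\langle e,\Delta f\rangle$ is superlinear and is controlled only through that expectation-level energy inequality, so you should commit to the fallback.
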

	
	\begin{proof}
For each $n > |x_0|$, introduce the stopping time
\begin{equation*}
	\sigma_n = \inf\{t \in [0, T] : |X(t)| \vee |\bar{X}(t)| \ge n\}.
\end{equation*}
To evaluate the deviation between $X$ and $\bar{X}$, set $e(t) = X(t) - \bar{X}(t)$, and for each $\varphi \in \{h, f, g\}$ write
\begin{equation*}
	\Delta \varphi(s) = \varphi(s, E(s), X(s)) - \varphi(s, E(s), \bar{X}(s)).
\end{equation*}
Applying It\^o's formula to the stopped error $|e(t \wedge \sigma_n)|^p$, we obtain
\begin{align}
	|e(t \wedge \sigma_n)|^p
	&\le \int_0^t \mathbf 1_{\{s \le \sigma_n\}} p |e(s)|^{p-2} \langle e(s), \Delta h(s) + \mathcal R_h(s)\rangle \mathrm ds \nonumber\\
	&\quad + \int_0^t \mathbf 1_{\{s \le \sigma_n\}} p |e(s)|^{p-2} \langle e(s), \Delta f(s) + \mathcal R_f(s)\rangle \mathrm dE(s) \nonumber\\
	&\quad + \frac{p(p-1)}{2} \int_0^t \mathbf 1_{\{s \le \sigma_n\}} |e(s)|^{p-2} \|\Delta g(s) + \mathcal R_g(s)\|^2 \mathrm dE(s) + M(t). \label{eq:ito-error}
\end{align}
where the local martingale term is given by
\begin{equation*}
	M(t) = p\int_0^t \mathbf 1_{\{s\le\sigma_n\}} |e(s)|^{p-2} e(s)^\top \bigl(\Delta g(s) + \mathcal R_g(s)\bigr) \mathrm dB(E(s)).
\end{equation*}
Splitting the diffusion perturbation with parameter $\varepsilon = (q - p)/(2(p - 1))$, we have
\begin{equation*}
	\|\Delta g(s) + \mathcal R_g(s)\|^2 \le (1 + \varepsilon)\|\Delta g(s)\|^2 + \Bigl(1 + \frac{1}{\varepsilon}\Bigr)\|\mathcal R_g(s)\|^2.
\end{equation*}
Using this bound, combining the drift contribution involving $\Delta f$ and the diffusion contribution involving $\Delta g$ via Assumption~\ref{ass:mono} gives
\begin{align*}
	&\quad p |e(s)|^{p-2}\Bigl(\langle e(s), \Delta f(s)\rangle + \frac{(p-1)(1+\varepsilon)}{2}\|\Delta g(s)\|^2\Bigr) \\
	&= p |e(s)|^{p-2}\Bigl(\langle e(s), \Delta f(s)\rangle + \frac{q-1}{2}\|\Delta g(s)\|^2 - \frac{q-p}{4}\|\Delta g(s)\|^2\Bigr) \\
	&\le \frac{p L_3}{2}|e(s)|^p - \frac{p(q-p)}{4}|e(s)|^{p-2}\|\Delta g(s)\|^2.
\end{align*}
Similarly, $\langle e(s), \Delta h(s)\rangle \le \frac{L_3}{2}|e(s)|^2$ by Assumption~\ref{ass:mono}.
Applying Young's inequality to the drift residuals for $\varphi \in \{h, f\}$ and inserting these estimates into~\eqref{eq:ito-error}, we obtain
\begin{align}
	&\quad |e(t\wedge\sigma_n)|^p+\frac{p(q-p)}4\int_0^t\mathbf 1_{\{s\le\sigma_n\}}|e(s)|^{p-2}\|\Delta g(s)\|^2 \mathrm dE(s) \nonumber\\
	&\le \frac{p(L_3 + 1)}{2}\int_0^t\mathbf 1_{\{s\le\sigma_n\}}|e(s)|^p \mathrm dA(s)+\frac{p}{2}\int_0^t\mathbf 1_{\{s\le\sigma_n\}}|e(s)|^{p-2}|\mathcal R_h(s)|^2 \mathrm ds \nonumber\\
	&\quad+\int_0^t\mathbf 1_{\{s\le\sigma_n\}}|e(s)|^{p-2}\left(\frac{p}{2}|\mathcal R_f(s)|^2+\frac{p(p-1)}{2}\Bigl(1+\frac{1}{\varepsilon}\Bigr)\|\mathcal R_g(s)\|^2\right) \mathrm dE(s)+M(t). \label{eq:energy}
\end{align}
Taking $\mathbb E_B$ on both sides of~\eqref{eq:energy}, noting $\mathbb E_B[M(t)] = 0$, and dropping the second nonnegative term on the left-hand side, we obtain
\begin{align*}
	&\quad\mathbb E_B\left[|e(t \wedge \sigma_n)|^p\right]\\
	&\le C \int_0^t \mathbb E_B\left[|e(s \wedge \sigma_n)|^p\right] \mathrm dA(s) + C \sum_{\varphi \in \{h, f, g\}} \int_0^t \left(\mathbb E_B\left[|e(s \wedge \sigma_n)|^p\right]\right)^{\frac{p-2}{p}} \left(\mathbb E_B\left[\|\mathcal R_\varphi(s)\|^p\right]\right)^{\frac{2}{p}} \mathrm d\mu_\varphi(s) \\
	&\le C \int_0^t \mathbb E_B\left[|e(s \wedge \sigma_n)|^p\right] \mathrm dA(s) + C \int_0^t \mathbb E_B\left[|e(s \wedge \sigma_n)|^p\right]^{1-\frac{2}{p}} \mathrm d\mathcal I(s) \\
	&\le C \int_0^t \mathbb E_B\left[|e(s \wedge \sigma_n)|^p\right] \mathrm dA(s) + C \mathcal I(t) \left(\sup_{0 \le r \le t} \mathbb E_B\left[|e(r \wedge \sigma_n)|^p\right]\right)^{1-\frac{2}{p}},
\end{align*}
where $\mathrm d\mu_h(s) = \mathrm ds$, $\mathrm d\mu_f(s) = \mathrm d\mu_g(s) = \mathrm dE(s)$, and we used the H\"older inequality and invoked the definition of $\mathcal I(t)$ in~\eqref{eq:forcing}.
Taking the supremum over $u \in [0, t]$ and applying the Gronwall inequality gives
\begin{align}\label{eq:gronwall-bound}
	\sup_{0 \le u \le t} \mathbb E_B\left[|e(u \wedge \sigma_n)|^p\right] &\le C e^{C A(t)} \mathcal I(t) \left(\sup_{0 \le u \le t} \mathbb E_B\left[|e(u \wedge \sigma_n)|^p\right]\right)^{1-\frac{2}{p}} \nonumber\\
	&\le C e^{C A(T)} \mathcal I(T)^{\frac{p}{2}},
\end{align}
where the second inequality follows by dividing both sides by the supremum factor.
Conversely, dropping the first term on the left-hand side of~\eqref{eq:energy} at $t = T$ and substituting~\eqref{eq:gronwall-bound} into the right-hand side, we similarly obtain
\begin{equation}\label{eq:diffusion-energy}
	\mathbb E_B\left[\int_0^T \mathbf 1_{\{s \le \sigma_n\}} |e(s)|^{p-2} \|\Delta g(s)\|^2 \mathrm dE(s)\right] \le C e^{C A(T)} \mathcal I(T)^{\frac{p}{2}}.
\end{equation}

For the local martingale term, the quadratic variation satisfies
\begin{equation*}
	[M, M]_T \le p^2\left(\sup_{0 \le t \le T} |e(t \wedge \sigma_n)|^p\right) \int_0^T \mathbf 1_{\{s \le \sigma_n\}} |e(s)|^{p-2} \|\Delta g(s) + \mathcal R_g(s)\|^2 \mathrm dE(s).
\end{equation*}
By the Burkholder--Davis--Gundy and Young inequalities, and invoking~\eqref{eq:gronwall-bound} and~\eqref{eq:diffusion-energy},
\begin{align}
	&\quad\mathbb E_B\left[\sup_{0 \le t \le T} |M(t)|\right]\nonumber\\
	&\le C \mathbb E_B\left[[M, M]_T^{\frac{1}{2}}\right] \nonumber\\
	&\le \frac{1}{2} \mathbb E_B\left[\sup_{0 \le t \le T} |e(t \wedge \sigma_n)|^p\right] + C \mathbb E_B\left[\int_0^T \mathbf 1_{\{s \le \sigma_n\}} |e(s)|^{p-2} \left(\|\Delta g(s)\|^2 + \|\mathcal R_g(s)\|^2\right) \mathrm dE(s)\right] \nonumber\\
	&\le \frac{1}{2} \mathbb E_B\left[\sup_{0 \le t \le T} |e(t \wedge \sigma_n)|^p\right] + C e^{C A(T)} \mathcal I(T)^{\frac{p}{2}}. \label{eq:martingale-bdg}
\end{align}
Taking the supremum over $t \in [0, T]$ and the expectation $\mathbb E_B$ on both sides of~\eqref{eq:energy}, and using~\eqref{eq:gronwall-bound} and~\eqref{eq:martingale-bdg} to absorb the martingale term, we obtain
\begin{equation}\label{eq:stopped-error-bound}
	\mathbb E_B\left[\sup_{0 \le t \le T} |e(t \wedge \sigma_n)|^p\right] \le C e^{C A(T)} \mathcal I(T)^{\frac{p}{2}}.
\end{equation}
Since the sample paths of $X$ and $\bar{X}$ are almost surely bounded on $[0, T]$, the stopping times satisfy $\sigma_n \to \infty$ almost surely as $n \to \infty$.
Applying Fatou's lemma to~\eqref{eq:stopped-error-bound} and recalling $A(T) = T + E(T)$ completes the proof.
\end{proof}

\section{Numerical method and its moment bounds}\label{sec:method}

The standard $D$-grid EM discretization can fail for SDEs with a state-dependent drift integrated against $t$, as demonstrated in~\cite{ZuoGuo2026TCSDDE}.
Large subordinator jumps leave intervals on which the drift integrated against $t$ is evaluated at only one state, which can prevent convergence.
To address these difficulties, we develop a sub-stepped truncated Euler--Maruyama (SSTEM) scheme.
It uses substeps of length at most $\Delta$ to resolve the drift integrated against $t$ and applies truncation to control the superlinear coefficients, following the approach in~\cite{Mao2015,Mao2016}.
The drift integrated against $E$ and the diffusion driven by $B(E)$ are updated at completed sampled endpoints using increments of length $\delta$ in the argument of $B$.

\subsection{Construction of the scheme}\label{subsec:stem-scheme}

Let $\Delta\in(0,1]$ be the maximum substep size for the drift integrated against $t$ and $\delta\in(0,1]$ the subordinator sampling step size, and set the combined mesh parameter
\begin{equation*}
	\rho=\max\{\Delta,\delta\}.
\end{equation*}
For fixed parameters $\kappa\in(0,1/4]$ and $R_0\ge1$, define the truncation radius
\begin{equation*}
	R=R_0\rho^{-\frac{\kappa}{\ell+1}},
\end{equation*}
and the radial projection $\pi_\rho\colon \mathbb R^d \to \mathbb R^d$ by
\begin{equation}\label{eq:truncation}
	\pi_\rho(x) = \begin{cases} x, & |x| \le R, \\ \frac{R x}{|x|}, & |x| > R. \end{cases}
\end{equation}
The truncated coefficients are $\varphi_\rho(t,u,x)=\varphi(t,u,\pi_\rho(x))$ for $\varphi\in\{h,f,g\}$.

The construction assumes that increments of $D$ with the prescribed law can be sampled.
For simulation methods for stable and tempered stable subordinator increments, see \cite{Magdziarz2009TimeDependent,GrabchakSaba2025}.
Errors introduced by an additional approximation of these increments are not included in the estimates below.

Sample $D(i\delta)$ until the first value exceeding $T$, and write $\tau_i=D(i\delta)$ and $N=\max\{i:\tau_i\le T\}$.
By the definition of $\tau_i$ and the monotonicity of $E$, we have
\begin{equation*}
	0 \le E(s)-i\delta\le\delta, \qquad \tau_i\le s\le\tau_{i+1}\wedge T.
\end{equation*}
The scheme uses these samples and the Brownian values $B(i\delta)$, with no exact evaluations of $E$.
Let $X_i$ approximate $X(\tau_i)$, starting from $X_0=x_0$, and write $X_{\Delta,\delta}(\tau_i)=X_i$.

Write $\Delta\tau_i=(\tau_{i+1}\wedge T)-\tau_i$ for $i=0,\ldots,N$.
On each interval with $\tau_i<T$, take $m_i=\lceil\Delta\tau_i/\Delta\rceil$ substeps for the drift integrated against $t$, with nodes
\begin{equation*}
	t_i^j=(\tau_i+j\Delta)\wedge\tau_{i+1}\wedge T, \qquad j=0,\ldots,m_i.
\end{equation*}
For notational convenience in what follows, we denote the substep increments by $\Delta t_i^j = t_i^{j+1}-t_i^j$.
Clearly, $\Delta t_i^j\le\Delta$ and $\sum_{j=0}^{m_i-1}\Delta t_i^j=\Delta\tau_i$.
Let $X_i^j$ denote the stage value at $t_i^j$, with $X_i^0=X_i$.
The update for the drift integrated against $t$ is
\begin{equation}\label{eq:driftstages}
	X_i^{j+1}=X_i^j+\Delta t_i^j h_\rho(t_i^j,i\delta,X_i^j), \qquad j=0,\ldots,m_i-1.
\end{equation}
If $i<N$, add the increments of the drift integrated against $E$ and the diffusion driven by $B(E)$ at the right endpoint:
\begin{equation}\label{eq:blockupdate}
	X_{i+1} = X_i^{m_i}+\delta f_\rho(t_i^{m_i-1},i\delta,X_i^{m_i-1}) +g_\rho(t_i^{m_i-1},i\delta,X_i^{m_i-1})\Delta B_i,
\end{equation}
where $\Delta B_i = B((i+1)\delta)-B(i\delta)$ and both coefficients are evaluated at the start of the final substep.
If $\tau_N < T$, the scheme executes only the substeps for the drift integrated against $t$ on the terminal interval $[\tau_N, T]$, defining the terminal approximation by $X_{\Delta,\delta}(T) = X_N^{m_N}$.
If $\tau_N=T$, we set $m_N=0$ and $X_N^0=X_{\Delta,\delta}(T)=X_N$.

\begin{remark}\label{rem:stem-design}
	Evaluating $h_\rho, f_\rho, g_\rho$ at the same state $X_i^{m_i-1}$ in the final substep allows their superlinear growth to be balanced simultaneously via the joint coercivity condition in Assumption~\ref{ass:coerc}.
	Throughout the preceding substeps $j = 0, \ldots, m_i-2$, the coercivity bound for $h_\rho$, the truncated coefficient bound, and the substep length control the growth of the stage values.
\end{remark}

\subsection{Moment bounds}\label{subsec:auxiliary-moments}

By Assumption~\ref{ass:space}, the truncated coefficients satisfy the uniform bounds
\begin{equation}\label{eq:coefbound}
	|\varphi_\rho(t, u, x)| \le C \rho^{-\kappa}, \qquad |\varphi_\rho(t, u, x)| \le C(1 + |x|^{\ell+1}),
\end{equation}
for all $(t, u, x) \in [0, T] \times [0, \infty) \times \mathbb R^d$.
Moreover, for these truncated coefficients, \cite{ZuoGuo2026} gives
\begin{equation}\label{eq:truncated-coercivity}
\begin{aligned}
	2\langle x, h_\rho(t, u, x)\rangle &\le C(1 + |x|^2), \\
	2\langle x, f_\rho(t, u, x)\rangle + (P - 1)\|g_\rho(t, u, x)\|^2 &\le C(1 + |x|^2),
\end{aligned}
\end{equation}
where $C$ is independent of $\rho\in(0,1]$, $t$, and $u$.

The following lemma establishes a uniform moment bound for the numerical values $X_i^j$.

\begin{lemma}\label{lem:nummom}
	Under Assumptions~\ref{ass:space} and~\ref{ass:coerc} with $P\ge4$, for every $0 < r < P$, there exists a constant $C > 0$ independent of $\Delta, \delta$ such that
	\begin{equation*}
		\mathbb E_B\left[\max_{0 \le i \le N} \max_{0 \le j \le m_i} (1 + |X_i^j|^2)^{\frac{r}{2}}\right] \le C e^{C(T + E(T))}.
	\end{equation*}
\end{lemma}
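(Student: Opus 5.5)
We work under $\mathbb P_B$ with the path of $D$ fixed, so $N$, the $\tau_i$, the $m_i$ and all substep lengths are deterministic. The plan is to establish a discrete energy inequality for $V(x)=(1+|x|^2)^{r/2}$ along the concatenated sequence of stage values, indexed in lexicographic order $(i,j)$, and then to close it with a discrete Gronwall argument. The Gronwall weight will be the total "clock" $\sum_{i,j}\Delta t_i^j+N\delta\le T+E(T)$, using $N\delta=E(\tau_N)\le E(T)$. It suffices to treat $r\in[4,P)$ close to $P$, since smaller $r$ then follow by Jensen/H\"older; we fix such $r$ with $r<P$.

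\textbf{Pure drift substeps ($j\le m_i-2$).} Here
\[
|X_i^{j+1}|^2=|X_i^j|^2+2\Delta t_i^j\langle X_i^j,h_\rho\rangle+(\Delta t_i^j)^2|h_\rho|^2 .
\]
By \eqref{eq:truncated-coercivity} and \eqref{eq:coefbound}, $(\Delta t_i^j)^2|h_\rho|^2\le C\Delta^{2-2\kappa}$. Since $\kappa\le1/4$, this is at most $C\Delta$, so $1+|X_i^{j+1}|^2\le(1+|X_i^j|^2)(1+C\Delta t_i^j)+C\Delta^{2-2\kappa}$. This handles the drift substeps only roughly.

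The cleaner route is to group each interval's substeps into a single step. The state $X_i^{m_i-1}$ is reached deterministically from $X_i$, and its growth factor is $\prod(1+C\Delta t_i^j)\le e^{C\Delta\tau_i}$. One caveat: the additive $C\Delta^{2-2\kappa}$ terms accumulate over the number of substeps $\sum_i m_i\le T/\Delta+N+1$. To avoid an unbounded $N$-dependence we bound them by $C\Delta^{1-2\kappa}\Delta t_i^j$ when the substep is full. For short substeps (at most one per interval) we use $|\Delta t h_\rho|^2\le\Delta t\cdot\Delta\, C\rho^{-2\kappa}$, which again is $\le C\Delta t_i^j$. Hence every additive term is $\le C\Delta t_i^j(1+|X_i^j|^2)$. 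Raising to the power $r/2$ is harmless because the step is deterministic.

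\textbf{Final substep combined with the random update.} Write
\[
Y=X_i^{m_i-1},\qquad X_{i+1}=Y+\Delta t\,h_\rho+\delta f_\rho+g_\rho\Delta B_i
\]
with $\Delta t=\Delta t_i^{m_i-1}$. Expanding $|X_{i+1}|^2=|Y|^2+\zeta$ and applying the Taylor expansion of $(1+|Y|^2+\zeta)^{r/2}$ to fourth order as in \cite{Mao2016}, the conditional expectation given $\mathcal F_{i\delta}$ yields, as leading terms,
\[
\tfrac r2(1+|Y|^2)^{\frac r2-1}\Bigl(2\Delta t\langle Y,h_\rho\rangle+\delta\bigl(2\langle Y,f_\rho\rangle+(r-1)\|g_\rho\|^2\bigr)\Bigr).
\]
Here the $(r-2)$ correction from $\mathbb E|\langle Y,g_\rho\Delta B\rangle|^2$ has been combined into $(r-1)\|g_\rho\|^2$, and since $r<P$ this is controlled by \eqref{eq:truncated-coercivity}. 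All remaining terms carry extra factors $\delta^{1/2}\rho^{-\kappa}$ or $\Delta\rho^{-\kappa}$, and the choice $\kappa\le 1/4$ together with $P\ge4$ absorbs them into $C(\Delta t+\delta)(1+|Y|^2)^{r/2}$. The result is
\[
\mathbb E_B[V(X_{i+1})\mid\mathcal F_{i\delta}]\le(1+C(\Delta t+\delta))V(Y)\le e^{C(\Delta\tau_i+\delta)}V(X_i).
\]
Iterating gives $\max_i\mathbb E_BV(X_i)\le Ce^{C(T+E(T))}$.

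\textbf{The maximum inside the expectation.} We write $V(X_i)$ as its initial value plus a predictable drift sum plus a martingale sum $\sum_k\xi_k$. The $\xi_k$ are dominated by $r(1+|Y|^2)^{(r-1)/2}|g_\rho\Delta B_k|$ plus higher-order terms. Applying the discrete BDG inequality to the martingale, then Young's inequality to absorb $\tfrac12\mathbb E_B\max V$, reduces everything to the pointwise bounds already proved. The intermediate stages $X_i^j$ are deterministic functions of $X_i$ with $V(X_i^j)\le e^{C\Delta\tau_i}V(X_i)$, so their maximum is controlled as well. The main obstacle is the higher-order remainder bookkeeping in the random step: verifying that every cross term involving $\delta f_\rho$, $\Delta t\,h_\rho$ and powers of $g_\rho\Delta B$ is $\le C(\Delta t+\delta)V(Y)$ uniformly in $\rho$. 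For this we will use the truncation bound $|\varphi_\rho|\le C\rho^{-\kappa}$ on all surplus factors and Gaussian moments $\mathbb E|\Delta B|^{2k}\le C\delta^k$.
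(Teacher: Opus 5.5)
\paragraph{Gap: the BDG step for the maximum.} Up to the one-step inequality $\mathbb E_B[V(X_{i+1})\mid\mathcal F_{i\delta}]\le e^{C(\Delta\tau_i+\delta)}V(X_i)$ your argument is sound and parallels the paper's. The paper works with the energy $(1+|x|^2)^{P/2}$ of top order $P$, but the computation is the same, since $(r-1)\|g_\rho\|^2\le(P-1)\|g_\rho\|^2$.

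The gap is the last step, where you move the maximum inside the expectation by BDG. After Young's inequality, the quadratic variation of the leading martingale part $\nabla V(Y_k)^\top g_\rho\,\Delta B_k$ leaves the term
\[
C\,\mathbb E_B\Bigl[\sum_{k}\delta\,(1+|Y_k|^2)^{\frac r2-1}\|g_\rho(t_k^{m_k-1},k\delta,Y_k)\|^2\Bigr],\qquad Y_k=X_k^{m_k-1}.
\]
This term is not controlled by the pointwise bounds you have proved. Because $g$ may grow like $|x|^{\ell+1}$, it is a moment of order $r+2\ell$. That exceeds $P$ precisely in the regime $r$ close to $P$ that you chose. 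The only uniform alternative, $\|g_\rho\|\le C\rho^{-\kappa}$, leaves a factor $\rho^{-2\kappa}$ that blows up as $\Delta,\delta\to0$. So ``reduces everything to the pointwise bounds already proved'' fails for superlinear diffusion.

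\paragraph{How to close it.} There are two ways.

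(a) Exploit the margin $r<P$. Write $2\langle Y,f_\rho\rangle+(r-1)\|g_\rho\|^2\le C(1+|Y|^2)-(P-r)\|g_\rho\|^2$ and keep the resulting term $-\tfrac r2(P-r)\,\delta\,(1+|Y|^2)^{\frac r2-1}\|g_\rho\|^2$ in the one-step estimate. Summing over $i$ then bounds exactly the quantity above by $Ce^{C(T+E(T))}$. This mirrors how the dissipative $\|\Delta g\|^2$ term is retained in the proof of Lemma~\ref{lem:stability}.

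(b) The paper's route, which needs no BDG at all. Your one-step inequality already says that $e^{-CA(\tau_i)}V(X_i)$ is a nonnegative supermartingale for the Brownian history up to $i\delta$, where $A(t)=t+E(t)$ and $A(\tau_{i+1})-A(\tau_i)=\Delta\tau_i+\delta$. Doob's maximal inequality then gives $\mathbb P_B(\max_i V(X_i)>v)\le\min\{1,e^{C(T+E(T))}V(x_0)/v\}$. Integrating this tail with any exponent $s\in(0,1)$ gives $\mathbb E_B[\max_i V(X_i)^s]\le (e^{C(T+E(T))}V(x_0))^s/(1-s)$. The paper applies this to the energy of order $P$ with $s=r/P$. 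Giving up a sliver of integrability here is exactly why the lemma is stated only for $r<P$. The stage values then follow from $V(X_i^j)\le e^{C\Delta\tau_i}V(X_i)$, as you say.

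A minor point: if $P=4$, your range $r\in[4,P)$ is empty. For exponents below $4$ you must use that $\nabla^4V$ is globally bounded, so the Taylor remainder is $C|X_{i+1}-Y|^4$, with expectation $\le C(\Delta t+\delta)$ by $\kappa\le1/4$. Working at order $P$, as the paper does, avoids this.
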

\begin{proof}
Set $Y(x) = (1+|x|^2)^{\frac{P}{2}}$.
Its derivatives satisfy $\|\nabla^j Y(x)\|\le C(1+|x|^2)^{\frac{P-j}{2}}$ for $1\le j\le4$.
We first bound the substep values $X_i^j$ on each interval $[\tau_i,\tau_{i+1}\wedge T]$ with $\tau_i<T$.
For each substep $0\le j<m_i$, Taylor expansion of $Y(x)$ combined with~\eqref{eq:driftstages} and~\eqref{eq:truncated-coercivity} yields
\begin{align*}
	Y(X_i^{j+1}) &\le Y(X_i^j) + P \Delta t_i^j (1+|X_i^j|^2)^{\frac{P}{2}-1} \langle X_i^j, h_\rho(t_i^j,i\delta,X_i^j)\rangle + C (\Delta t_i^j)^2 \rho^{-2\kappa} Y(X_i^j) \\
	&\le (1 + C \Delta t_i^j) Y(X_i^j),
\end{align*}
where we have used $(\Delta t_i^j)^2\rho^{-2\kappa}\le \Delta t_i^j \rho^{1-2\kappa}\le \Delta t_i^j$.
Iterating over the substeps and applying the inequality $1+z\le e^z$, we obtain
\begin{equation}\label{eq:drift-stage-bound}
	\max_{0 \le j \le m_i} Y(X_i^j) \le \exp\left(C \sum_{j=0}^{m_i-1} \Delta t_i^j\right) Y(X_i) = e^{C\Delta\tau_i} Y(X_i).
\end{equation}

Next, we bound the endpoint value $X_{i+1}$ for $i<N$.
Combining the final substep in~\eqref{eq:driftstages} with~\eqref{eq:blockupdate} gives
\begin{equation}\label{eq:block-increment}
	X_{i+1} - X_i^{m_i-1} = \Delta t_i^{m_i-1} h_\rho + \delta f_\rho + g_\rho \Delta B_i,
\end{equation}
where all coefficients are evaluated at $(t_i^{m_i-1},i\delta,X_i^{m_i-1})$.
The third-order Taylor expansion of $Y(X_{i+1})$ around $X_i^{m_i-1}$ takes the form
\begin{align}\label{eq:taylor-expansion}
	Y(X_{i+1}) &= Y(X_i^{m_i-1}) + \nabla Y(X_i^{m_i-1})^\top (X_{i+1} - X_i^{m_i-1}) \nonumber\\
	&\quad + \frac{1}{2} (X_{i+1} - X_i^{m_i-1})^\top \nabla^2 Y(X_i^{m_i-1}) (X_{i+1} - X_i^{m_i-1}) \nonumber\\
	&\quad + \frac{1}{6} \nabla^3 Y(X_i^{m_i-1})(X_{i+1} - X_i^{m_i-1})^3 + R_4,
\end{align}
where the fourth-order remainder satisfies
\begin{equation*}
	|R_4| \le C\bigl(1+|X_i^{m_i-1}|^2\bigr)^{\frac{P-4}{2}} |X_{i+1} - X_i^{m_i-1}|^4 + C |X_{i+1} - X_i^{m_i-1}|^P.
\end{equation*}
Substituting~\eqref{eq:block-increment} into~\eqref{eq:taylor-expansion}, taking $\mathbb E_B$ on both sides, and using~\eqref{eq:truncated-coercivity}, we obtain
\begin{align*}
	&\quad \mathbb E_B[Y(X_{i+1})] \\
	&\le \mathbb E_B\Bigl[Y(X_i^{m_i-1}) + P\bigl(1+|X_i^{m_i-1}|^2\bigr)^{\frac{P}{2}-1}\langle X_i^{m_i-1}, \Delta t_i^{m_i-1}h_\rho + \delta f_\rho\rangle \\
	&\qquad + \frac{1}{2}(g_\rho\Delta B_i)^\top \nabla^2 Y(X_i^{m_i-1}) (g_\rho\Delta B_i)\Bigr] + C(\Delta t_i^{m_i-1}+\delta)\mathbb E_B[Y(X_i^{m_i-1})] \\
	&\le \mathbb E_B\Bigl[Y(X_i^{m_i-1}) + \frac{P}{2}\bigl(1+|X_i^{m_i-1}|^2\bigr)^{\frac{P}{2}-1}\Bigl(2\Delta t_i^{m_i-1}\langle X_i^{m_i-1},h_\rho\rangle \\
	&\qquad + \delta\bigl(2\langle X_i^{m_i-1},f_\rho\rangle+(P-1)\|g_\rho\|^2\bigr)\Bigr)\Bigr] + C(\Delta t_i^{m_i-1}+\delta)\mathbb E_B[Y(X_i^{m_i-1})] \\
	&\le (1+C(\Delta t_i^{m_i-1}+\delta))\mathbb E_B[Y(X_i^{m_i-1})],
\end{align*}
where the higher-order terms and remainder are bounded by $C(\Delta t_i^{m_i-1}+\delta)\mathbb E_B[Y(X_i^{m_i-1})]$ using~\eqref{eq:coefbound}, $\Delta t_i^{m_i-1}+\delta\le2\rho$, and $P(1/2-\kappa)\ge1$.
Combining this with~\eqref{eq:drift-stage-bound}, we deduce
\begin{equation}\label{eq:block-recursion}
	\mathbb E_B[Y(X_{i+1})] \le e^{C(\Delta\tau_i+\delta)}\mathbb E_B[Y(X_i)].
\end{equation}
Since the Brownian increment $\Delta B_i$ is independent of the past trajectory up to $i\delta$, \eqref{eq:block-recursion} holds conditionally on the Brownian history up to $i\delta$.
Recalling $A(t) = t + E(t)$ and $E(\tau_i) = i\delta$, we have $A(\tau_{i+1}) - A(\tau_i) = \Delta\tau_i + \delta$.
Hence the discrete process
\begin{equation*}
	M_i = e^{-C A(\tau_i)} Y(X_i), \qquad 0\le i\le N,
\end{equation*}
is a nonnegative supermartingale with respect to the Brownian history up to $i\delta$, starting from $M_0 = Y(x_0)$ and satisfying $A(\tau_i) \le A(T) = T+E(T)$.
Since $Y(X_i) \le e^{C A(T)} M_i$ for all $0 \le i \le N$, applying Doob's maximal inequality to the supermartingale $(M_i)_{0\le i\le N}$ gives
\begin{equation*}
	\mathbb P_B\left(\max_{0\le i\le N} Y(X_i) > v\right) \le \min\left\{1,\frac{e^{C(T+E(T))}Y(x_0)}{v}\right\}, \qquad v>0.
\end{equation*}
Integrating this tail estimate with exponent $r/P \in (0, 1)$ yields
\begin{equation}\label{eq:node-moments}
	\mathbb E_B\left[\max_{0\le i\le N}(1+|X_i|^2)^{\frac{r}{2}}\right] \le \frac{\bigl(e^{C(T+E(T))}Y(x_0)\bigr)^{\frac{r}{P}}}{1-\frac{r}{P}} \le C e^{C(T+E(T))}.
\end{equation}
Finally, raising~\eqref{eq:drift-stage-bound} to the power $r/P$, taking the maximum over $0 \le i \le N$, and using $\Delta\tau_i \le T$, we obtain
\begin{align*}
	\mathbb E_B\left[\max_{0 \le i \le N} \max_{0 \le j \le m_i} (1 + |X_i^j|^2)^{\frac{r}{2}}\right] &\le C \mathbb E_B\left[\max_{0 \le i \le N} (1 + |X_i|^2)^{\frac{r}{2}}\right] \\
	&\le C e^{C(T+E(T))},
\end{align*}
which completes the proof.
\end{proof}

For the convergence analysis, we introduce the continuous-time approximation $\bar{X}(s)$ on $[0,T]$ following~\cite{Mao2015,LiuMaoTangWu2020,ZuoGuo2026}.
For each $0\le i\le N$ with $\tau_i<T$, $0\le j<m_i$, and $s\in[t_i^j,t_i^{j+1}]$, define
\begin{align}\label{eq:aux-def}
	\bar{X}(s) &= X_i^j + (s - t_i^j) h_\rho(t_i^j, i\delta, X_i^j) 
	+ (E(s) - i\delta) f_\rho(t_i^{m_i-1}, i\delta, X_i^{m_i-1}) \nonumber\\
	&\quad+ g_\rho(t_i^{m_i-1}, i\delta, X_i^{m_i-1})\bigl(B(E(s)) - B(i\delta)\bigr),
\end{align}
with $\bar{X}(T)=X_{\Delta,\delta}(T)$ if $T=\tau_N$.
By construction, $\bar{X}$ is a continuous and adapted process satisfying $\bar{X}(0)=x_0$ and $\bar{X}(\tau_i)=X_i$ for $0\le i\le N$.
On the time grid $\mathcal T_\delta=\{\tau_i:0\le i\le N\}\cup\{T\}$, $\bar{X}(t)$ and $X_{\Delta,\delta}(t)$ can differ only at $t=T$ when $\tau_N<T$.

\begin{lemma}\label{lem:nodal-residual}
	Under Assumptions~\ref{ass:space}--\ref{ass:coerc} with $P\ge4$, for every $2\le p<q$ with $p(2\ell+1)<P$,
	\begin{equation*}
		\mathbb E_B\left[\max_{t\in\mathcal T_\delta}|\bar{X}(t)-X_{\Delta,\delta}(t)|^p\right] \le C e^{C(T+E(T))}\delta^{\frac{p}{2}}.
	\end{equation*}
	Consequently,
	\begin{equation*}
		\mathbb E\left[\max_{t\in\mathcal T_\delta}|\bar{X}(t)-X_{\Delta,\delta}(t)|^p\right] \le C\delta^{\frac{p}{2}}.
	\end{equation*}
\end{lemma}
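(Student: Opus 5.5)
The plan is to observe that the maximum over $\mathcal T_\delta$ reduces to a single point, and then to bound the error at $T$ directly using the moment bound of \cref{lem:nummom}. By construction $\bar X(\tau_i)=X_i=X_{\Delta,\delta}(\tau_i)$ for $0\le i\le N$, and if $\tau_N=T$ we also set $\bar X(T)=X_{\Delta,\delta}(T)$. Hence the maximum equals $|\bar X(T)-X_{\Delta,\delta}(T)|\mathbf 1_{\{\tau_N<T\}}$, and it suffices to treat the event $\tau_N<T$.

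\textbf{Step 1: identify the difference at $T$.} On the terminal interval $[\tau_N,T]$ we have $m_N\ge1$ and $T=t_N^{m_N}$. Evaluating \eqref{eq:aux-def} at $s=T$ on the last substep $j=m_N-1$ gives
\begin{align*}
\bar X(T) &= X_N^{m_N-1}+\Delta t_N^{m_N-1}h_\rho(t_N^{m_N-1},N\delta,X_N^{m_N-1})\\
&\quad+(E(T)-N\delta)f_\rho(\cdot)+g_\rho(\cdot)\bigl(B(E(T))-B(N\delta)\bigr),
\end{align*}
where all coefficients are evaluated at $(t_N^{m_N-1},N\delta,X_N^{m_N-1})$. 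By \eqref{eq:driftstages}, the first two terms on the right are exactly $X_N^{m_N}=X_{\Delta,\delta}(T)$. Therefore
\begin{equation*}
\bar X(T)-X_{\Delta,\delta}(T)=(E(T)-N\delta)f_\rho(\cdot)+g_\rho(\cdot)\bigl(B(E(T))-B(N\delta)\bigr).
\end{equation*}

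\textbf{Step 2: estimate under $\mathbb E_B$.} With the path of $D$ fixed, the quantities $N$, $E(T)$ and $m_N$ are deterministic, and $0\le E(T)-N\delta\le\delta$. The stage value $X_N^{m_N-1}$ depends only on $B$ up to time $N\delta$, through $\Delta B_0,\dots,\Delta B_{N-1}$. It is therefore independent of the increment $B(E(T))-B(N\delta)$, which is Gaussian with variance $(E(T)-N\delta)I_m\le\delta I_m$.

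The growth bound in \eqref{eq:coefbound} gives $|f_\rho|+\|g_\rho\|\le C(1+|X_N^{m_N-1}|^{\ell+1})$. Using this together with the independence and Gaussian moments,
\begin{equation*}
\mathbb E_B\bigl[|\bar X(T)-X_{\Delta,\delta}(T)|^p\bigr]\le C(\delta^p+\delta^{p/2})\,\mathbb E_B\bigl[(1+|X_N^{m_N-1}|^2)^{\frac{(\ell+1)p}{2}}\bigr].
\end{equation*}
Since $(\ell+1)p\le(2\ell+1)p<P$, \cref{lem:nummom} applies with $r=(\ell+1)p$. Together with $\delta\le1$, this yields the first claim, $C e^{C(T+E(T))}\delta^{p/2}$.

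\textbf{Step 3: full expectation.} We take $\mathbb E_D$ of the bound from Step 2 and apply \cref{lem:inverse-moments} with $a=0$, which gives $\mathbb E_D[e^{CE(T)}]<\infty$. This yields the second claim.

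The only delicate point is the measurability and independence used in Step 2. One must check that $X_N^{m_N-1}$ uses no Brownian increment beyond $N\delta$, which holds because the blockwise update \eqref{eq:blockupdate} is applied only for $i<N$. Working under $\mathbb P_B$ with $D$ frozen makes the random index $N$ harmless. The stronger hypothesis $p(2\ell+1)<P$ is more than is needed here; it presumably anticipates later lemmas.
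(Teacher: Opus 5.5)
Your proposal is correct and follows the paper's proof almost step for step. You reduce the maximum over $\mathcal T_\delta$ to the terminal point and identify $\bar X(T)-X_{\Delta,\delta}(T)$ as the $f_\rho$ and $g_\rho$ increments over $[N\delta,E(T)]$; you then bound its $p$th moment via \eqref{eq:coefbound} and Lemma~\ref{lem:nummom} with $r=p(\ell+1)$, and integrate over $D$ with Lemma~\ref{lem:inverse-moments}. You also justify explicitly why $X_N^{m_N-1}$ is independent of $B(E(T))-B(N\delta)$, which the paper uses without comment, and you are right that only $p(\ell+1)<P$ is actually needed here.
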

\begin{proof}
	For $\tau_N<T$, since $X_{\Delta,\delta}(T)=X_N^{m_N}=X_N^{m_N-1}+\Delta t_N^{m_N-1}h_\rho(t_N^{m_N-1},N\delta,X_N^{m_N-1})$, evaluating~\eqref{eq:aux-def} at $s=T$ with $j=m_N-1$ yields
	\begin{align*}
		\bar{X}(T)-X_{\Delta,\delta}(T) &= (E(T)-N\delta)f_\rho(t_N^{m_N-1},N\delta,X_N^{m_N-1}) \\
		&\quad + g_\rho(t_N^{m_N-1},N\delta,X_N^{m_N-1})\bigl(B(E(T))-B(N\delta)\bigr).
	\end{align*}
	Since $\bar{X}(\tau_i)=X_{\Delta,\delta}(\tau_i)$ for $0\le i\le N$ and $0\le E(T)-N\delta\le\delta$, taking $\mathbb E_B$ and applying~\eqref{eq:coefbound} together with Lemma~\ref{lem:nummom} for $r=p(\ell+1)<P$, we obtain
	\begin{align*}
		&\quad \mathbb E_B\left[\max_{t\in\mathcal T_\delta}|\bar{X}(t)-X_{\Delta,\delta}(t)|^p\right] \\
		&= \mathbb E_B\left[|\bar{X}(T)-X_{\Delta,\delta}(T)|^p\right] \\
		&\le C\delta^p\mathbb E_B[|f_\rho(t_N^{m_N-1},N\delta,X_N^{m_N-1})|^p]+C\delta^{\frac{p}{2}}\mathbb E_B[\|g_\rho(t_N^{m_N-1},N\delta,X_N^{m_N-1})\|^p] \\
		&\le C\left(\delta^p+\delta^{\frac{p}{2}}\right)\mathbb E_B[1+|X_N^{m_N-1}|^{p(\ell+1)}] \\
		&\le C e^{C(T+E(T))}\delta^{\frac{p}{2}}.
	\end{align*}
	Taking expectation with respect to $D$ and applying Lemma~\ref{lem:inverse-moments} completes the proof.
\end{proof}

\section{Strong convergence rate}\label{sec:convergence}

Throughout this section, we fix an exponent $p$ satisfying
\begin{equation}\label{eq:prange}
	2 \le p < q, \qquad P > \left(1+\frac{1}{\kappa}\right)p(\ell+1).
\end{equation}
Since $\kappa\in(0,1/4]$, condition~\eqref{eq:prange} implies $P\ge4$ and $p(2\ell+1)<P$.

\subsection{Convergence rates for general subordinators}\label{subsec:defects}

For $0\le i\le N$ with $\tau_i<T$, $0\le j<m_i$, and $s\in[t_i^j,t_i^{j+1})$, set
\begin{equation}\label{eq:defects-def}
\begin{aligned}
	\mathcal R_h(s) &= h(s,E(s),\bar{X}(s))-h_\rho(t_i^j,i\delta,X_i^j), \\
	\mathcal R_f(s) &= f(s, E(s), \bar{X}(s)) - f_\rho(t_i^{m_i-1},i\delta,X_i^{m_i-1}), \\
	\mathcal R_g(s) &= g(s, E(s), \bar{X}(s)) - g_\rho(t_i^{m_i-1},i\delta,X_i^{m_i-1}).
\end{aligned}
\end{equation}

With these residuals, $\bar{X}$ satisfies the perturbed equation preceding Lemma~\ref{lem:stability}.
\begin{lemma}\label{lem:defects}
	For every exponent $p$ satisfying~\eqref{eq:prange} and $s\in[\tau_i,\tau_{i+1}\wedge T)$,
	\begin{align*}
		\mathbb E_B\left[|\mathcal R_h(s)|^p\right]^{\frac{2}{p}} &\le C e^{C(T+E(T))} \bigl(\Delta^2 + \delta + \Delta^{2\beta_{h,t}} + \delta^{2\beta_{h,u}}\bigr), \\
		\mathbb E_B\left[|\mathcal R_\varphi(s)|^p\right]^{\frac{2}{p}} &\le C e^{C(T+E(T))} \bigl(\Delta^2+(\Delta\tau_i)^2+\delta+(\Delta\tau_i)^{2\beta_{\varphi,t}}+\delta^{2\beta_{\varphi,u}}\bigr),
	\end{align*}
	where $\varphi\in\{f,g\}$.
\end{lemma}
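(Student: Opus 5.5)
We split each residual into a time-regularity part, a truncation part, and a state-increment part. For $\varphi\in\{h,f,g\}$ with frozen point $(t_*,u_*,x_*)$, namely $(t_i^j,i\delta,X_i^j)$ for $h$ and $(t_i^{m_i-1},i\delta,X_i^{m_i-1})$ for $f,g$, we write
\begin{equation*}
	\mathcal R_\varphi(s)=\bigl[\varphi(s,E(s),\bar X(s))-\varphi(t_*,u_*,\bar X(s))\bigr]+\bigl[\varphi(t_*,u_*,\bar X(s))-\varphi(t_*,u_*,x_*)\bigr]+\bigl[\varphi(t_*,u_*,x_*)-\varphi_\rho(t_*,u_*,x_*)\bigr].
\end{equation*}
The first bracket is handled by Assumption~\ref{ass:time}. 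We use $0\le E(s)-i\delta\le\delta$ on $[\tau_i,\tau_{i+1}\wedge T)$, together with $|s-t_i^j|\le\Delta$ for $h$ and $|s-t_i^{m_i-1}|\le\Delta\tau_i$ for $f,g$. Combined with the moment bound $\mathbb E_B[(1+|\bar X(s)|)^{p(\ell+1)}]\le Ce^{C(T+E(T))}$, this gives the terms $\Delta^{2\beta_{h,t}}+\delta^{2\beta_{h,u}}$, respectively $(\Delta\tau_i)^{2\beta_{\varphi,t}}+\delta^{2\beta_{\varphi,u}}$. The moment bound for $\bar X$ follows from \eqref{eq:aux-def}, Lemma~\ref{lem:nummom}, \eqref{eq:coefbound}, and $\mathbb E_B|B(E(s))-B(i\delta)|^r\le C\delta^{r/2}$.

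The third bracket vanishes unless $|x_*|>R$. There it is bounded by $C(1+|x_*|^{\ell+1})\mathbf 1_{\{|x_*|>R\}}$. By H\"older and Chebyshev, its $L^p(\mathbb P_B)$ norm squared is at most $Ce^{C(T+E(T))}R^{-2(P-p(\ell+1))/p}$, up to an $\epsilon$ in the exponent since only moments $r<P$ are available. Since $R=R_0\rho^{-\kappa/(\ell+1)}$, this is $\rho^{2\kappa(P-p(\ell+1))/(p(\ell+1))}$, and condition \eqref{eq:prange} makes the exponent exceed $2$, so the term is $\le C\rho^2\le C(\Delta^2+\delta)$. I expect this exponent bookkeeping to be routine given the choice of \eqref{eq:prange}.

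The second bracket is the main step. By Assumption~\ref{ass:space} and H\"older,
\begin{equation*}
	\mathbb E_B[|\cdot|^p]^{2/p}\le C\,\mathbb E_B\bigl[(1+|\bar X(s)|^{\ell}+|x_*|^{\ell})^{2p}\bigr]^{1/p}\,\mathbb E_B\bigl[|\bar X(s)-x_*|^{2p}\bigr]^{1/p}.
\end{equation*}
For $h$, $x_*=X_i^j$ and $s\in[t_i^j,t_i^{j+1})$, so \eqref{eq:aux-def} gives
\begin{equation*}
	\bar X(s)-X_i^j=(s-t_i^j)h_\rho(\cdot)+(E(s)-i\delta)f_\rho(\cdot)+g_\rho(\cdot)(B(E(s))-B(i\delta)).
\end{equation*}
The conditional independence of the Brownian increment from $X_i^{m_i-1}$ (both measurable up to time $i\delta$, with $E$ deterministic under $\mathbb P_B$) and the polynomial growth in \eqref{eq:coefbound} give the bound $C e^{C(T+E(T))}(\Delta^2+\delta^2+\delta)$.

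For $f,g$, $x_*=X_i^{m_i-1}$ and $s$ ranges over the whole interval, so we must also bound the drift-stage displacement $X_i^{j}-X_i^{m_i-1}$. It is a sum of at most $m_i$ substep increments of $h_\rho$, each of length at most $\Delta$, with total length at most $\Delta\tau_i+\Delta$. This gives a bound $C(\Delta\tau_i+\Delta)\max_{j}(1+|X_i^j|^{\ell+1})$, hence the terms $\Delta^2+(\Delta\tau_i)^2$. This is the source of the $(\Delta\tau_i)^2$ term, which cannot be avoided because the coefficients are frozen at the last stage.

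The obstacle is the moment budget. The products require moments of order up to about $2p(2\ell+1)$ of the stage values, so either \eqref{eq:prange} must supply this, or H\"older exponents must be chosen more carefully (e.g.\ $p$-th moments of a product via exponents $r_1,r_2$ with $r_1+r_2$ balanced against $P$). I would first try pairing $(1+|x|^\ell)^p$ with $|\bar X-x_*|^p$ through the maxima in Lemma~\ref{lem:nummom}, using $r=p(2\ell+1)<P$ as guaranteed after \eqref{eq:prange}.
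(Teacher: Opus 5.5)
Your proposal is correct and follows essentially the paper's argument: the same three-way split into a temporal part, a spatial part, and a truncation tail. The spatial part uses the drift-stage displacement $|X_i^j-X_i^{m_i-1}|\le C\Delta\tau_i\max_k(1+|X_i^k|^{\ell+1})$ to produce the $(\Delta\tau_i)^2$ term, and the truncation tail is controlled with a moment order $r_*<P$ taken close to $P$. The paper orders the split the other way (spatial difference at $(s,E(s))$, temporal difference at the frozen state), so its temporal term needs no moments of $\bar X(s)$; otherwise the two arguments coincide.

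The moment-budget obstacle you flag is not real. Since $\kappa\le 1/4$, \eqref{eq:prange} gives $P>5p(\ell+1)>2p(2\ell+1)$. Your Cauchy--Schwarz route therefore goes through as written, provided the moments of $\bar X(s)$ are bounded with the uniform bound $|g_\rho|\le C\rho^{-\kappa}$ rather than the polynomial one. Your fallback of pairing with $r=p(2\ell+1)$ is exactly what the paper does.
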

\begin{proof}
	Fix $s\in[t_i^j,t_i^{j+1})$.
	We first estimate the spatial differences in~\eqref{eq:defects-def}.
	By~\eqref{eq:aux-def}, \eqref{eq:coefbound}, $|s-t_i^j|\le\Delta$, and Lemma~\ref{lem:nummom} with $p(2\ell+1)<P$, we have
	\begin{align*}
		\mathbb E_B\left[\left(1+\max_{0\le k\le m_i}|X_i^k|^\ell\right)^p |\bar{X}(s)-X_i^j|^p\right]^{\frac{1}{p}}
		&\le Ce^{C(T+E(T))}(\Delta+\delta^{\frac{1}{2}}), \\
		\mathbb E_B\left[|\bar{X}(s)-X_i^j|^{p(\ell+1)}\right]^{\frac{1}{p}}
		&\le Ce^{C(T+E(T))}(\Delta+\delta^{\frac{1}{2}}),
	\end{align*}
	where the second bound uses $|g_\rho|\le C\rho^{-\kappa}$ with $\delta\le\rho$ and $\kappa\le 1/4$.
	Moreover, iterating~\eqref{eq:driftstages} and using~\eqref{eq:coefbound} yields
	\begin{align*}
		|X_i^j - X_i^{m_i-1}| &\le \sum_{k=j}^{m_i-2}\Delta t_i^k |h_\rho(t_i^k,i\delta,X_i^k)| \\
		&\le C\Delta\tau_i\left(1+\max_{0\le k\le m_i}|X_i^k|^{\ell+1}\right).
	\end{align*}
	Applying Assumption~\ref{ass:space}, the triangle inequality $|\bar{X}(s)-X_i^{m_i-1}|\le |\bar{X}(s)-X_i^j|+|X_i^j-X_i^{m_i-1}|$, and Lemma~\ref{lem:nummom} now gives
	\begin{equation}\label{eq:spatial-defects}
	\begin{aligned}
		\mathbb E_B\left[|h(s,E(s),\bar{X}(s))-h(s,E(s),X_i^j)|^p\right]^{\frac{1}{p}} &\le Ce^{C(T+E(T))}(\Delta+\delta^{\frac{1}{2}}), \\
		\mathbb E_B\left[|\varphi(s,E(s),\bar{X}(s))-\varphi(s,E(s),X_i^{m_i-1})|^p\right]^{\frac{1}{p}} &\le Ce^{C(T+E(T))}(\Delta+\Delta\tau_i+\delta^{\frac{1}{2}}),
	\end{aligned}
	\end{equation}
	for $\varphi\in\{f,g\}$.
	By Assumption~\ref{ass:time}, $|s-t_i^j|\le\Delta$, and $|s-t_i^{m_i-1}|\le \Delta\tau_i$, the temporal differences satisfy
	\begin{equation}\label{eq:temporal-defects}
	\begin{aligned}
		\mathbb E_B\left[|h(s,E(s),X_i^j)-h(t_i^j,i\delta,X_i^j)|^p\right]^{\frac{1}{p}} &\le Ce^{C(T+E(T))}\bigl(\Delta^{\beta_{h,t}}+\delta^{\beta_{h,u}}\bigr), \\
		\mathbb E_B\left[|\varphi(s,E(s),X_i^{m_i-1})-\varphi(t_i^{m_i-1},i\delta,X_i^{m_i-1})|^p\right]^{\frac{1}{p}} &\le Ce^{C(T+E(T))}\bigl((\Delta\tau_i)^{\beta_{\varphi,t}}+\delta^{\beta_{\varphi,u}}\bigr),
	\end{aligned}
	\end{equation}
	for $\varphi\in\{f,g\}$.
	Finally, fix $r_* \in [(1+1/\kappa)p(\ell+1), P)$.
	For $\varphi\in\{h,f,g\}$ and any stage value $X_i^j$, since $\varphi(t,u,X_i^j)=\varphi_\rho(t,u,X_i^j)$ for $|X_i^j|\le R$, using~\eqref{eq:coefbound} and Lemma~\ref{lem:nummom} yields
	\begin{align}
		\mathbb E_B\left[|\varphi(t,u,X_i^j)-\varphi_\rho(t,u,X_i^j)|^p\right]^{\frac{1}{p}}
		&\le C R^{-\frac{r_*-p(\ell+1)}{p}}\mathbb E_B[|X_i^j|^{r_*}]^{\frac{1}{p}} \nonumber\\
		&\le C e^{C(T+E(T))}\rho^{\frac{\kappa(r_*-p(\ell+1))}{p(\ell+1)}} \nonumber\\
		&\le C e^{C(T+E(T))}(\Delta+\delta^{\frac{1}{2}}). \label{eq:trunc-defects}
	\end{align}
	Combining~\eqref{eq:spatial-defects}, \eqref{eq:temporal-defects}, and~\eqref{eq:trunc-defects} by Minkowski's inequality and squaring the $L^p$ bounds completes the proof.
\end{proof}

\begin{lemma}\label{lem:integrated}
	For $p$ in~\eqref{eq:prange}, the residual integral in~\eqref{eq:forcing} satisfies
	\begin{equation*}
		\mathcal I(T) \le C e^{C(T+E(T))}(1+E(T)) \Bigl(\Delta^{2\beta_{h,t}} + \delta^{\min\{1, 2\beta_{f,t}, 2\beta_{g,t}, 2\beta_u\}}\Bigr).
	\end{equation*}
\end{lemma}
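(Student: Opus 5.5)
The plan is to integrate the pointwise residual bounds of Lemma~\ref{lem:defects} against the two measures $\mathrm ds$ and $\mathrm dE(s)$ appearing in~\eqref{eq:forcing}. Throughout I will use $\Delta,\delta\le1$ and $\beta_{\varphi,t},\beta_{\varphi,u}\le1$. These give $\Delta^2\le\Delta^{2\beta_{h,t}}$ and $\delta^{2\beta_{\varphi,u}}\le\delta^{2\beta_u}$. I will also use the counting bound $N\delta=E(\tau_N)\le E(T)$, so that $(N+1)\delta\le 1+E(T)$.

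\emph{The $\mathrm ds$ part.} The first bound of Lemma~\ref{lem:defects} is uniform in $s\in[0,T)$. Integrating it over $[0,T]$ gives
\begin{equation*}
\int_0^T \mathbb E_B[|\mathcal R_h(s)|^p]^{\frac{2}{p}}\,\mathrm ds \le CTe^{C(T+E(T))}\bigl(\Delta^{2\beta_{h,t}}+\delta^{\min\{1,2\beta_u\}}\bigr).
\end{equation*}
This is already of the required form.

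\emph{The $\mathrm dE(s)$ part.} I split $[0,T)$ into the blocks $[\tau_i,\tau_{i+1}\wedge T)$ for $0\le i\le N$. On each block $E$ increases from $i\delta$ to at most $(i+1)\delta$, so the block has $\mathrm dE$-mass at most $\delta$. The second bound of Lemma~\ref{lem:defects} is constant in $s$ on each block, hence
\begin{equation*}
\int_0^T(\cdots)\,\mathrm dE(s)\le Ce^{C(T+E(T))}\sum_{i=0}^N\delta\Bigl(\Delta^2+\delta+\delta^{2\beta_u}+\sum_{\varphi\in\{f,g\}}\bigl((\Delta\tau_i)^2+(\Delta\tau_i)^{2\beta_{\varphi,t}}\bigr)\Bigr).
\end{equation*}
The terms that do not depend on $i$ contribute $(N+1)\delta(\Delta^2+\delta+\delta^{2\beta_u})$. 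By the counting bound this is at most $(1+E(T))\bigl(\Delta^{2\beta_{h,t}}+2\delta^{\min\{1,2\beta_u\}}\bigr)$.

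\emph{The main step: terms $\delta\sum_i(\Delta\tau_i)^{2\beta}$ with $\beta\in\{1,\beta_{f,t},\beta_{g,t}\}$.} These are the only place where large subordinator jumps enter. I will use only $\sum_i\Delta\tau_i\le T$ and the count $N+1$, in two cases.
\begin{itemize}
\item If $2\beta\ge1$, then $(\Delta\tau_i)^{2\beta}\le T^{2\beta-1}\Delta\tau_i$. Hence $\delta\sum_i(\Delta\tau_i)^{2\beta}\le C\delta$.
\item If $2\beta<1$, H\"older's (Jensen's) inequality for the concave power gives $\sum_{i=0}^N(\Delta\tau_i)^{2\beta}\le (N+1)^{1-2\beta}T^{2\beta}$. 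Therefore
\begin{equation*}
\delta\sum_{i=0}^N(\Delta\tau_i)^{2\beta}\le C\delta^{2\beta}\bigl((N+1)\delta\bigr)^{1-2\beta}\le C\delta^{2\beta}(1+E(T)).
\end{equation*}
\end{itemize}
In both cases the contribution is bounded by $C(1+E(T))\delta^{\min\{1,2\beta\}}$.

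Summing all pieces and absorbing constants yields the claimed bound
\begin{equation*}
\mathcal I(T)\le Ce^{C(T+E(T))}(1+E(T))\bigl(\Delta^{2\beta_{h,t}}+\delta^{\min\{1,2\beta_{f,t},2\beta_{g,t},2\beta_u\}}\bigr).
\end{equation*}
The key point is that the random block lengths $\Delta\tau_i$ are never required to be small. They are controlled only through their total $T$ and the number of blocks $N+1\lesssim 1+E(T)/\delta$. The concave-sum case $2\beta<1$ is the one I expect to need the most care, since there the balance between $\delta$ and $N$ must be tracked exactly to produce $\delta^{2\beta}$.
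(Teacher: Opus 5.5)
Your proposal is correct and follows essentially the same route as the paper. You integrate the bounds of Lemma~\ref{lem:defects} blockwise, using that each block $[\tau_i,\tau_{i+1}\wedge T)$ carries $\mathrm dE$-mass at most $\delta$, and then control $\delta\sum_i(\Delta\tau_i)^a$ only through $\sum_i\Delta\tau_i\le T$ and $N\delta\le E(T)$. Your two-case argument (a linear bound for $a\ge1$, concavity for $a<1$) supplies exactly the justification of the inequality~\eqref{eq:blocksum}, which the paper states without proof.
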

\begin{proof}
	Since $\int_{\tau_i}^{\tau_{i+1}\wedge T}\mathrm dE(s) = E(\tau_{i+1}\wedge T) - i\delta \le \delta$, integrating the bounds in Lemma~\ref{lem:defects} against $\mathrm ds$ and $\mathrm dE(s)$ yields
	\begin{align}
		\mathcal I(T) &\le C e^{C(T+E(T))} \biggl( \int_0^T \bigl(\Delta^2 + \delta + \Delta^{2\beta_{h,t}} + \delta^{2\beta_{h,u}}\bigr) \mathrm ds \nonumber\\
		&\qquad + \sum_{i=0}^N \int_{\tau_i}^{\tau_{i+1}\wedge T} \bigl(\Delta^2 + \delta + \delta^{2\beta_u} + (\Delta\tau_i)^2 + (\Delta\tau_i)^{2\beta_{f,t}} + (\Delta\tau_i)^{2\beta_{g,t}}\bigr) \mathrm dE(s) \biggr) \nonumber\\
		&\le C e^{C(T+E(T))} \biggl(\Delta^{2\beta_{h,t}} + (1+E(T))\bigl(\Delta^2+\delta+\delta^{2\beta_u}\bigr) \nonumber\\
		&\qquad + \delta\sum_{i=0}^N \bigl((\Delta\tau_i)^2 + (\Delta\tau_i)^{2\beta_{f,t}} + (\Delta\tau_i)^{2\beta_{g,t}}\bigr)\biggr). \label{eq:forcing-blocks}
	\end{align}
	Since $\sum_{i=0}^N \Delta\tau_i = T$ and $N \le E(T)/\delta$, for any $a > 0$ we have
	\begin{equation}\label{eq:blocksum}
		\delta \sum_{i=0}^N (\Delta\tau_i)^a \le C \delta^{\min\{1, a\}} (1 + E(T))^{(1-a) \vee 0}.
	\end{equation}
	Applying~\eqref{eq:blocksum} with $a\in\{2, 2\beta_{f,t}, 2\beta_{g,t}\}$ to~\eqref{eq:forcing-blocks} and using $\Delta^2\le\Delta^{2\beta_{h,t}}$ completes the proof.
\end{proof}

We now establish the main strong convergence result for general subordinators.

\begin{theorem}\label{thm:main}
	Under Assumptions~\ref{ass:space}--\ref{ass:coerc}, let $p$ satisfy~\eqref{eq:prange}, and choose the discretization and truncation parameters as in Section~\ref{subsec:stem-scheme}.
	Then
	\begin{align*}
		&\quad \mathbb E\left[\sup_{0\le t\le T}|X(t)-\bar{X}(t)|^p\right] + \mathbb E\left[\max_{t\in\mathcal T_\delta}|X(t)-X_{\Delta,\delta}(t)|^p\right] \\
		&\le C\Bigl(\Delta^{2\beta_{h,t}}+\delta^{\min\{1, 2\beta_{f,t}, 2\beta_{g,t}, 2\beta_u\}}\Bigr)^{\frac{p}{2}}.
	\end{align*}
\end{theorem}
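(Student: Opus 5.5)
The argument combines the stability estimate of Lemma~\ref{lem:stability} with the residual bound of Lemma~\ref{lem:integrated}, conditionally on the path of $D$, and then integrates over $D$ using Lemma~\ref{lem:inverse-moments}. First, we check that $\bar{X}$ defined in~\eqref{eq:aux-def} satisfies the perturbed SDE preceding Lemma~\ref{lem:stability} with the residuals~\eqref{eq:defects-def}. On each $[t_i^j,t_i^{j+1})$ the differential of $\bar{X}$ is
\begin{equation*}
h_\rho(t_i^j,i\delta,X_i^j)\,\mathrm ds+f_\rho(t_i^{m_i-1},i\delta,X_i^{m_i-1})\,\mathrm dE(s)+g_\rho(t_i^{m_i-1},i\delta,X_i^{m_i-1})\,\mathrm dB(E(s)).
\end{equation*}
Continuity of $\bar{X}$ at the substep nodes and at the block endpoints $\tau_{i+1}$ follows from~\eqref{eq:driftstages}--\eqref{eq:blockupdate} and $E(\tau_{i+1})=(i+1)\delta$. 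The residuals are predictable, being built from frozen values at left endpoints together with the continuous adapted $\bar{X}$. Hence the representation holds.

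Next, fix the path of $D$. Since $p<q$, Lemma~\ref{lem:stability} applies and gives
\begin{equation*}
\mathbb E_B\Bigl[\sup_{t\le T}|X(t)-\bar{X}(t)|^p\Bigr]\le Ce^{C(T+E(T))}\mathcal I(T)^{p/2}.
\end{equation*}
Lemma~\ref{lem:integrated} then yields $\mathcal I(T)^{p/2}\le Ce^{C(T+E(T))}(1+E(T))^{p/2}\,\varepsilon_{\Delta,\delta}^{p/2}$, where $\varepsilon_{\Delta,\delta}=\Delta^{2\beta_{h,t}}+\delta^{\min\{1,2\beta_{f,t},2\beta_{g,t},2\beta_u\}}$. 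Lemma~\ref{lem:integrated} requires $\mathcal I(T)<\infty$ almost surely; this follows from its own bound, since $E(T)<\infty$ almost surely. Taking $\mathbb E_D$ and applying Lemma~\ref{lem:inverse-moments} with $a=p/2$ bounds $\mathbb E[(1+E(T))^{p/2}e^{CE(T)}]$ by a constant, which gives the first term of the claimed estimate.

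For the nodal error, write $|X(t)-X_{\Delta,\delta}(t)|\le|X(t)-\bar{X}(t)|+|\bar{X}(t)-X_{\Delta,\delta}(t)|$ for $t\in\mathcal T_\delta$. The first piece is bounded by the supremum already controlled. The second piece is bounded by Lemma~\ref{lem:nodal-residual}, which applies because~\eqref{eq:prange} implies $p(2\ell+1)<P$ and $P\ge4$; it gives $C\delta^{p/2}$. Since $\min\{1,\dots\}\le1$ and $\delta\le1$, we have $\delta^{p/2}\le\bigl(\delta^{\min\{1,2\beta_{f,t},2\beta_{g,t},2\beta_u\}}\bigr)^{p/2}$, so this piece is absorbed into the stated rate. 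The elementary inequality $(a+b)^p\le 2^{p-1}(a^p+b^p)$ then combines everything.

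The main point needing care is the interplay of the random constants. The factors $e^{C(T+E(T))}$ from Lemma~\ref{lem:stability} and Lemma~\ref{lem:integrated} multiply, and a polynomial factor $(1+E(T))^{p/2}$ appears as well. These must be integrated over $D$, and this is exactly where the exponential integrability of Lemma~\ref{lem:inverse-moments} (valid for every $c$) is essential. A second point is verifying the hypotheses $2\le p<q$ and the moment condition on $P$ needed by Lemmas~\ref{lem:defects} and~\ref{lem:nummom}; both are guaranteed by~\eqref{eq:prange}.
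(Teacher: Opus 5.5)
Your proposal is correct and follows the paper's proof essentially step for step: Lemma~\ref{lem:stability} combined with Lemma~\ref{lem:integrated} for fixed $D$, integration over $D$ via Lemma~\ref{lem:inverse-moments}, and the triangle inequality on $\mathcal T_\delta$ with Lemma~\ref{lem:nodal-residual}. Your explicit absorption of $\delta^{p/2}$ into the stated rate spells out a step the paper leaves implicit. Two small wording points: the hypothesis $\mathcal I(T)<\infty$ almost surely belongs to Lemma~\ref{lem:stability}, not Lemma~\ref{lem:integrated}; and the predictability of $\mathcal R_f,\mathcal R_g$ holds because $X_i^{m_i-1}$ is a deterministic function of $X_i$ and the $D$-path, hence known at $\tau_i$, not because it is a left-endpoint value in time.
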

\begin{proof}
	Combining Lemma~\ref{lem:stability} with Lemma~\ref{lem:integrated}, taking expectation with respect to $D$, and applying Lemma~\ref{lem:inverse-moments}, we obtain
	\begin{align*}
		\mathbb E\left[\sup_{0\le t\le T}|X(t)-\bar{X}(t)|^p\right]
		&\le C \mathbb E\left[e^{C(T+E(T))}\mathcal I(T)^{\frac{p}{2}}\right] \\
		&\le C \mathbb E\left[e^{C(T+E(T))}(1+E(T))^{\frac{p}{2}}\right]\Bigl(\Delta^{2\beta_{h,t}}+\delta^{\min\{1, 2\beta_{f,t}, 2\beta_{g,t}, 2\beta_u\}}\Bigr)^{\frac{p}{2}} \\
		&\le C\Bigl(\Delta^{2\beta_{h,t}}+\delta^{\min\{1, 2\beta_{f,t}, 2\beta_{g,t}, 2\beta_u\}}\Bigr)^{\frac{p}{2}}.
	\end{align*}
	Together with Lemma~\ref{lem:nodal-residual}, the triangle inequality on $\mathcal T_\delta$ then gives
	\begin{align*}
		\mathbb E\left[\max_{t\in\mathcal T_\delta}|X(t)-X_{\Delta,\delta}(t)|^p\right]
		&\le C \mathbb E\left[\sup_{0\le t\le T}|X(t)-\bar{X}(t)|^p\right] + C \mathbb E\left[\max_{t\in\mathcal T_\delta}|\bar{X}(t)-X_{\Delta,\delta}(t)|^p\right] \\
		&\le C\Bigl(\Delta^{2\beta_{h,t}}+\delta^{\min\{1, 2\beta_{f,t}, 2\beta_{g,t}, 2\beta_u\}}\Bigr)^{\frac{p}{2}},
	\end{align*}
	which completes the proof.
\end{proof}

For equal step sizes $\delta = \Delta$, Theorem~\ref{thm:main} implies that the strong convergence order in $L^p$ is $\min\{\beta_t, \beta_u, 1/2\}$.
In particular, when $\beta_t, \beta_u \ge 1/2$, the SSTEM scheme achieves the strong convergence order $1/2$.

\subsection{\texorpdfstring{Sharpened rates under a Laplace exponent bound}{Sharpened rates under a Laplace exponent bound}}\label{subsec:theorems}
The following growth condition on the Laplace exponent $\Phi$ yields a sharper moment bound for sums of powers of the random interval lengths.
\begin{assumption}\label{ass:laplace-growth}
There exist ${\gamma}\in(0,1)$ and $L_5>0$ such that
\begin{equation*}
	\Phi(\lambda)\le L_5\lambda^{\gamma}, \qquad \lambda\ge1.
\end{equation*}
\end{assumption}
For convenience, we define $\omega_a\colon (0,1]\to(0,\infty)$ for each $a>0$ by
\begin{equation*}
	\omega_a(\delta) = \begin{cases} 
		\delta^{\frac{a}{{\gamma}} \wedge 1}, & a \ne {\gamma}, \\ 
		\delta(1 + |\log\delta|), & a = {\gamma}. 
	\end{cases}
\end{equation*}

\begin{lemma}\label{lem:stable-blocks}
	Suppose Assumption~\ref{ass:laplace-growth} holds, and fix $a>0$, $r\ge1$, and $c>0$.
	Then, uniformly for all $\delta \in (0, 1]$,
	\begin{equation*}
		\mathbb E\left[e^{c E(T)} \left(\delta\sum_{i=0}^N (\Delta\tau_i)^a\right)^r\right] \le C \omega_a(\delta)^r.
	\end{equation*}
\end{lemma}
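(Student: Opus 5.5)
The case $a\ge1$ is immediate from~\eqref{eq:blocksum}: since $\Delta\tau_i\le T$, we have $\delta\sum_i(\Delta\tau_i)^a\le T^{a-1}\delta\sum_i\Delta\tau_i\le C\delta$. The claim then follows from Lemma~\ref{lem:inverse-moments}, because $a/\gamma\wedge1=1$ when $a\ge1>\gamma$. So I assume $a<1$ and reduce to estimates on the i.i.d.\ increments $\xi_i=D((i+1)\delta)-D(i\delta)$, which have Laplace exponent $\delta\Phi$. We have $\Delta\tau_i\le\xi_i\wedge T$ and $N+1\le E(T)/\delta+1$. Since $E(T)>n\delta$ iff $D(n\delta)\le T$, I bound
\begin{equation*}
S:=\delta\sum_{i=0}^N(\Delta\tau_i)^a\le\delta\sum_{i=0}^{K}(\xi_i\wedge T)^a,\qquad K=\lceil E(T)/\delta\rceil .
\end{equation*}
The random index $K$ is handled by Cauchy--Schwarz. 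I write $\mathbb E[e^{cE(T)}S^r]\le \mathbb E[e^{2cE(T)}]^{1/2}\,\mathbb E[S^{2r}]^{1/2}$, and I truncate $K$ at a deterministic level $n_0/\delta$. On the event $\{E(T)>n_0\}$ I use the crude bound $S\le C(1+E(T))$ together with a tail estimate: $\mathbb P(E(T)>u)=\mathbb P(D(u)\le T)\le e^{\lambda T-u\Phi(\lambda)}$ decays faster than any exponential, because $\Phi(\lambda)\to\infty$. Alternatively, I sum over dyadic levels $E(T)\in(k,k+1]$ using Lemma~\ref{lem:inverse-moments}. This leaves moments of $Z_n=\delta\sum_{i<n/\delta}(\xi_i\wedge T)^a$ with deterministic $n$.

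The core step is a one-increment estimate. Using $x^a=\frac{a}{\Gamma(1-a)}\int_0^\infty(1-e^{-\lambda x})\lambda^{-a-1}\,d\lambda$ for $a\in(0,1)$ and $1-e^{-\lambda\xi}$ with $\mathbb E e^{-\lambda\xi}=e^{-\delta\Phi(\lambda)}$, I get
\begin{equation*}
\mathbb E[\xi^a\wedge T^a]\le C\int_0^\infty\bigl(1-e^{-\delta\Phi(\lambda)}\bigr)\lambda^{-a-1}\,d\lambda
\end{equation*}
(the truncation at $T$ handles small $\lambda$). I then split the integral at $\lambda_*=\delta^{-1/\gamma}$. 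For $\lambda\le\lambda_*$ I use $1-e^{-y}\le y$ and Assumption~\ref{ass:laplace-growth}, giving $\delta\int_1^{\lambda_*}\lambda^{\gamma-a-1}d\lambda$. For $\lambda>\lambda_*$ I bound the integrand by $1$, giving $\lambda_*^{-a}=\delta^{a/\gamma}$. The first piece is $C\delta^{a/\gamma}$ if $a<\gamma$, $C\delta|\log\delta|$ if $a=\gamma$, and $C\delta$ if $a>\gamma$. The range $\lambda\in(0,1)$ contributes $C\delta$, using $\Phi(\lambda)\le\Phi(1)$ and the truncation at $T$. In all cases $\mathbb E[(\xi\wedge T)^a]\le C\omega_a(\delta)$. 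Multiplying by $\delta\cdot n/\delta$ gives $\mathbb E Z_n\le Cn\,\omega_a(\delta)$, which is the right order; $\omega_a(\delta)/\delta\to\infty$ is the sharpened gain over~\eqref{eq:blocksum}.

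The main obstacle is upgrading this first-moment bound to the $2r$-th moment of $Z_n$ without losing the rate. I plan to expand $\mathbb E[Z_n^{2r}]$ (for integer $2r$, then use Jensen/Lyapunov) over products of i.i.d.\ terms. The dominant terms are those with distinct indices, which give $(n\,\omega_a)^{2r}$. Terms with repeated indices involve $\mathbb E[(\xi\wedge T)^{ka}]\le C\omega_{ka}(\delta)$ multiplied by extra factors of $\delta$. I need $\delta^{m-1}\omega_{ka}(\delta)\lesssim\omega_a(\delta)^{m}$ for the relevant counts $m$. This should follow from $\omega_{ka}\le C$, $\omega_a\ge\delta$, and the explicit power structure of $\omega$. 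Alternatively, a Rosenthal inequality for sums of i.i.d.\ nonnegative variables gives the same conclusion.
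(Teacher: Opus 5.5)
Your architecture matches the paper's. It consists of a one-increment bound $\mathbb E[(\xi\wedge T)^b]\le C\omega_b(\delta)$, a multinomial expansion over a deterministic number of increments closed by $\delta^{j-1}\omega_{aj}(\delta)\le C\omega_a(\delta)^j$, and a decomposition over the levels $\{k\le E(T)<k+1\}$ using Cauchy--Schwarz and the faster-than-exponential tail of $E(T)$.

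The genuine difference is the one-increment step. The paper derives $\mathbb P(D(\delta)>x)\le C\min\{1,\delta x^{-\gamma}\}$ for $x\le T$ by applying Markov's inequality to $1-e^{-D(\delta)/x}$, then integrates against $bx^{b-1}$. This gives the bound for every $b>0$ at once, which is exactly what the repeated-index terms $b=ar_j$ require, and it needs no case split in $a$. Your representation $x^a=\frac{a}{\Gamma(1-a)}\int_0^\infty(1-e^{-\lambda x})\lambda^{-a-1}\,\mathrm d\lambda$, split at $\lambda_*=\delta^{-1/\gamma}$, is an equally short transform-side derivation and shows clearly where the three regimes of $\omega_a$ come from. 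However, it only covers $b<1$. You therefore need the easy supplementary bound $\mathbb E[(\xi\wedge T)^b]\le T^{b-1}\mathbb E[\xi\wedge T]\le C\delta$ for $b\ge1$, in addition to your separate treatment of $a\ge1$.

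Two steps need repair.

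\textbf{The range $\lambda\in(0,1)$.} The bound $\Phi(\lambda)\le\Phi(1)$ only gives $\delta\Phi(1)\int_0^1\lambda^{-a-1}\,\mathrm d\lambda=\infty$. The untruncated integral really does diverge: for a stable subordinator, $\mathbb E[\xi^a]=\infty$ when $a\ge\alpha$. Instead, apply the representation to $\xi\wedge T$ and use $\mathbb E[1-e^{-\lambda(\xi\wedge T)}]\le\lambda\,\mathbb E[\xi\wedge T]\le C\lambda\delta$. The last inequality follows from $\min\{y,1\}\le(1-e^{-1})^{-1}(1-e^{-y})$ with $y=\xi/T$. The extra factor $\lambda$ makes the contribution of $(0,1)$ equal to $C\delta$. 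For $\lambda\ge1$, use $1-e^{-\lambda(\xi\wedge T)}\le 1-e^{-\lambda\xi}$.

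\textbf{The random index.} Discard your first option. With $n_0$ fixed, the crude bound $S\le C(1+E(T))$ on $\{E(T)>n_0\}$ carries no factor $\omega_a(\delta)$, so the resulting bound on that event does not vanish as $\delta\to0$. If you let $n_0\to\infty$ as $\delta\to0$ to make it negligible, the main term $\mathbb E[Z_{n_0}^{2r}]$ is inflated by the unbounded factor $n_0^{2r}$, and uniformity in $\delta$ is lost.

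Your alternative is the right one, and it is precisely the paper's argument:
\begin{enumerate}
\item On each level, bound $S$ by $Z_{k+2}$.
\item Apply Cauchy--Schwarz on that level to split off $\mathbb P(E(T)\ge k)^{1/2}$; this is needed because the level event and $Z_{k+2}$ are dependent.
\item Use that the multinomial bound $\mathbb E[Z_n^{m}]\le C(1+n)^m\omega_a(\delta)^m$ grows only polynomially in $n$.
\item Sum over $k$ against the tail $\mathbb P(E(T)\ge k)\le e^{-c'k}\,\mathbb E[e^{c'E(T)}]$ from Lemma~\ref{lem:inverse-moments}, with $c'$ large.
\end{enumerate}
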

\begin{proof}
Recall that $\tau_i=D(i\delta)$ for all integers $i\ge0$, so $\tau_{i+1}-\tau_i$ has the same law as $D(\delta)$ by stationary increments.
For $0<x\le T$, Assumption~\ref{ass:laplace-growth} and the monotonicity of $\Phi$ give $\Phi(1/x)\le Cx^{-{\gamma}}$.
Applying Markov's inequality and using the Laplace transform, we obtain
\begin{align*}
	\mathbb P(D(\delta)>x) &= \mathbb P\left(1-e^{-D(\delta)/x}>1-e^{-1}\right) \\
	&\le \frac{\mathbb E\left[1-e^{-D(\delta)/x}\right]}{1-e^{-1}} \\
	&= \frac{1-e^{-\delta\Phi(1/x)}}{1-e^{-1}} \\
	&\le C\min\{1,\delta x^{-{\gamma}}\}.
\end{align*}
Consequently, by Fubini's theorem, for every $b > 0$,
\begin{align}
	\mathbb E\left[(D(\delta)\wedge T)^b\right] 
	&= b\int_0^T x^{b-1}\mathbb P(D(\delta)>x) \mathrm dx \nonumber\\
	&\le C b\int_0^T x^{b-1} \min\{1, \delta x^{-{\gamma}}\} \mathrm dx \nonumber\\
	&= C b \left(\int_0^{T\wedge\delta^{1/{\gamma}}} x^{b-1} \mathrm{d}x + \delta \int_{T\wedge\delta^{1/{\gamma}}}^T x^{b-{\gamma}-1} \mathrm{d}x\right) \nonumber\\
	&\le C \omega_b(\delta), \label{eq:single-inc-moment}
\end{align}
where evaluating the two integrals for $b \ne {\gamma}$ and $b = {\gamma}$ yields the definition of $\omega_b(\delta)$.

To control the sum for a deterministic number $n \ge 1$ of terms, observe that for every integer $j \ge 1$,
\begin{equation*}
	\delta^{j-1}\omega_{aj}(\delta) \le C \omega_a(\delta)^j.
\end{equation*}
For an integer $m\ge2r$, the multinomial expansion, independence of the increments, and~\eqref{eq:single-inc-moment} with $b=ar_j$ give
\begin{align}
	&\quad \mathbb E\left[\left(\delta\sum_{i=0}^{n-1}\bigl((\tau_{i+1}-\tau_i)\wedge T\bigr)^a\right)^m\right] \nonumber\\
	&\le C\delta^m\sum_{k=1}^m n^k \sum_{\substack{r_1+\cdots+r_k=m\\r_1,\ldots,r_k\ge1}} \prod_{j=1}^k \omega_{ar_j}(\delta) \nonumber\\
	&\le C \omega_a(\delta)^m\sum_{k=1}^m(n\delta)^k \nonumber\\
	&\le C(1+n\delta)^m\omega_a(\delta)^m. \label{eq:determ-block-sum}
\end{align}
The multinomial coefficients depend only on $m$ and are absorbed into $C$.

To handle the random index $N$ and the factor $e^{cE(T)}$, we partition the sample space into the events $\{k\le E(T)<k+1\}$ for integers $k\ge0$.
Since $N\delta\le E(T)<k+1$ on each such event, we have $N\le\lceil(k+1)/\delta\rceil-1$, and hence
\begin{align*}
	e^{cE(T)} &\le e^{c(k+1)}, \\
	\delta\sum_{i=0}^{N}(\Delta\tau_i)^a &\le \delta\sum_{i=0}^{\lceil(k+1)/\delta\rceil-1}\bigl((\tau_{i+1}-\tau_i)\wedge T\bigr)^a.
\end{align*}
Applying the Cauchy--Schwarz inequality, followed by Lyapunov's inequality with $m\ge2r$ and~\eqref{eq:determ-block-sum}, gives
\begin{align*}
	&\quad \mathbb E\left[\mathbf 1_{\{k\le E(T)<k+1\}}e^{cE(T)}\left(\delta\sum_{i=0}^{N}(\Delta\tau_i)^a\right)^r\right] \\
	&\le e^{c(k+1)}\left(\mathbb E\left[\left(\delta\sum_{i=0}^{\lceil(k+1)/\delta\rceil-1}\bigl((\tau_{i+1}-\tau_i)\wedge T\bigr)^a\right)^{2r}\right]\right)^{\frac12}\mathbb P(E(T)\ge k)^{\frac12} \\
	&\le Ce^{c(k+1)}(k+3)^r\omega_a(\delta)^r\mathbb P(E(T)\ge k)^{\frac12},
\end{align*}
where $\lceil(k+1)/\delta\rceil\delta\le k+2$ for $\delta\in(0,1]$.
By the inverse relation and Markov's inequality, for every $\lambda>0$,
\begin{align*}
	\mathbb P(E(T)\ge k) &\le \mathbb P(D(k)\le T) \\
	&=\mathbb P\left(e^{-\lambda D(k)}\ge e^{-\lambda T}\right) \\
	&\le e^{\lambda T}\mathbb E\left[e^{-\lambda D(k)}\right] \\
	&=e^{\lambda T-k\Phi(\lambda)}.
\end{align*}
Summing over $k\ge0$ and combining the preceding bounds, we obtain
\begin{align*}
	&\quad \mathbb E\left[e^{cE(T)}\left(\delta\sum_{i=0}^{N}(\Delta\tau_i)^a\right)^r\right] \\*
	&=\sum_{k=0}^{\infty}\mathbb E\left[\mathbf 1_{\{k\le E(T)<k+1\}}e^{cE(T)}\left(\delta\sum_{i=0}^{N}(\Delta\tau_i)^a\right)^r\right] \\
	&\le C\omega_a(\delta)^r e^{c+\frac{\lambda T}{2}}\sum_{k=0}^{\infty}(k+3)^r e^{-k(\frac{\Phi(\lambda)}{2}-c)} \\
	&\le C\omega_a(\delta)^r.
\end{align*}
Since $\Phi(\lambda)\to\infty$ as $\lambda\to\infty$, we can choose $\lambda$ such that $\Phi(\lambda)>2c$, so the series converges independently of $\delta$, which completes the proof.
\end{proof}

\begin{theorem}\label{thm:stable-rate}
	Under the assumptions of Theorem~\ref{thm:main}, suppose further that Assumption~\ref{ass:laplace-growth} holds.
	If $2\beta_{f,t} \ne {\gamma}$ and $2\beta_{g,t} \ne {\gamma}$, then
	\begin{align*}
		&\quad \mathbb E\left[\sup_{0\le t\le T}|X(t)-\bar{X}(t)|^p\right]+\mathbb E\left[\max_{t\in\mathcal T_\delta}|X(t)-X_{\Delta,\delta}(t)|^p\right] \\
		&\le C\Bigl(\Delta^{2\beta_{h,t}}+\delta^{\min\left\{1, \frac{2\beta_{f,t}}{{\gamma}}, \frac{2\beta_{g,t}}{{\gamma}}, 2\beta_u\right\}}\Bigr)^{\frac{p}{2}}.
	\end{align*}
	If $2\beta_{f,t} = {\gamma}$ or $2\beta_{g,t} = {\gamma}$, the same bound holds with an additional factor $1 + |\log\delta|$ multiplying $\delta^{\min\left\{1, \frac{2\beta_{f,t}}{{\gamma}}, \frac{2\beta_{g,t}}{{\gamma}}, 2\beta_u\right\}}$.
\end{theorem}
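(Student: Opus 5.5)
The plan is to repeat the proof of Theorem~\ref{thm:main}, but to keep the block sums $\delta\sum_i(\Delta\tau_i)^a$ explicit inside the expectation over $D$ instead of bounding them pathwise by \eqref{eq:blocksum}. Concretely, the sums will be estimated through Lemma~\ref{lem:stable-blocks}. The first step is to return to the intermediate bound \eqref{eq:forcing-blocks} from the proof of Lemma~\ref{lem:integrated}. Using $\Delta^2\le\Delta^{2\beta_{h,t}}$ and $\delta^{2\beta_{h,u}}\le\delta^{2\beta_u}$, it reads
\begin{equation*}
	\mathcal I(T)\le Ce^{C(T+E(T))}\Bigl((1+E(T))\bigl(\Delta^{2\beta_{h,t}}+\delta^{\min\{1,2\beta_u\}}\bigr)+\sum_{a\in\{2,2\beta_{f,t},2\beta_{g,t}\}}\delta\sum_{i=0}^N(\Delta\tau_i)^a\Bigr).
\end{equation*}

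Next, Lemma~\ref{lem:stability} gives $\mathbb E_B[\sup_t|X-\bar X|^p]\le Ce^{C(T+E(T))}\mathcal I(T)^{p/2}$. I raise the bound above to the power $p/2\ge1$ and use $(x_1+\cdots+x_4)^{p/2}\le C\sum_k x_k^{p/2}$. The factor $e^{C(T+E(T))}$ is kept multiplying each term, and $\mathbb E_D$ is then taken term by term.

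The first group of terms is handled by Lemma~\ref{lem:inverse-moments}. It gives $\mathbb E[e^{CE(T)}(1+E(T))^{p/2}]<\infty$, so this group contributes $C(\Delta^{2\beta_{h,t}}+\delta^{\min\{1,2\beta_u\}})^{p/2}$.

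For each block-sum term, I apply Lemma~\ref{lem:stable-blocks} with $r=p/2\ge1$ and $c=C$. This yields $C\omega_a(\delta)^{p/2}$. For $a=2$ one has $\omega_2(\delta)=\delta$, since $2>\gamma$. For $a=2\beta_{\varphi,t}\neq\gamma$, $\omega_a(\delta)=\delta^{\min\{1,2\beta_{\varphi,t}/\gamma\}}$. In the case $2\beta_{\varphi,t}=\gamma$, $\omega_a(\delta)=\delta(1+|\log\delta|)$. Here $\delta=\delta^{2\beta_{\varphi,t}/\gamma}$, and this produces the logarithmic factor stated in the theorem.

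Collecting terms with $\delta^{x}+\delta^{y}\le2\delta^{\min\{x,y\}}$ for $\delta\le1$ gives the bound for $\mathbb E[\sup_t|X-\bar X|^p]$ with exponent $\min\{1,2\beta_{f,t}/\gamma,2\beta_{g,t}/\gamma,2\beta_u\}$. In the logarithmic case, the factor $1+|\log\delta|\ge1$ may multiply the whole $\delta$-power, so it can also be attached to the other $\delta$ terms harmlessly.

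The nodal term is treated exactly as in Theorem~\ref{thm:main}, via the triangle inequality on $\mathcal T_\delta$ and Lemma~\ref{lem:nodal-residual}. That lemma contributes $C\delta^{p/2}$, and $\delta^{p/2}$ is dominated by the stated rate because the exponent is at most $1$.

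The only delicate point is ensuring that the $D$-dependent factor multiplying the block sums is only $e^{C(T+E(T))}$, with no extra $(1+E(T))$ weight. The bound on $\mathcal I(T)$ above is arranged so that this holds. If a polynomial weight $(1+E(T))^{p/2}$ did appear, I would absorb it into $e^{cE(T)}$ by enlarging $c$, since $(1+x)^{p/2}\le Ce^{x}$. Lemma~\ref{lem:stable-blocks} allows arbitrary $c>0$, so this causes no difficulty. Some care is also needed that no $r<1$ is required; this holds because $p\ge2$.
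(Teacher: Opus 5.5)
Your proposal is correct and is essentially the paper's own proof. The paper also returns to \eqref{eq:forcing-blocks}, keeps the block sums inside the $\mathbb E_D$-expectation, replaces the pathwise estimate \eqref{eq:blocksum} by Lemma~\ref{lem:stable-blocks} (with $r=p/2$), and otherwise proceeds as in Theorem~\ref{thm:main}, with the nodal term handled by Lemma~\ref{lem:nodal-residual}; your write-up just makes explicit the bookkeeping the paper leaves implicit, namely $\omega_2(\delta)=\delta$, the origin of the logarithmic factor when $2\beta_{\varphi,t}=\gamma$, and that only $e^{C(T+E(T))}$ multiplies the block sums.
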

\begin{proof}
\rev{Applying Lemma~\ref{lem:stable-blocks} to the sums in~\eqref{eq:forcing-blocks} in place of~\eqref{eq:blocksum} and proceeding as in the proof of Theorem~\ref{thm:main} yields the stated bounds, which completes the proof.}
\end{proof}

\begin{remark}\label{rem:laplace-examples}
Assumption~\ref{ass:laplace-growth} includes stable subordinators with $\Phi(\lambda)=c\lambda^\alpha$ and tempered stable subordinators with $\Phi(\lambda)=c((\lambda+\eta)^\alpha-\eta^\alpha)$, where $0<\alpha<1$ and $c,\eta>0$.
For both classes, one may take $\gamma=\alpha$ in Assumption~\ref{ass:laplace-growth}.
For a Gamma subordinator, $\Phi(\lambda)=c\log(1+\lambda/\eta)$ satisfies Assumption~\ref{ass:laplace-growth} for arbitrarily small $\gamma>0$.
Theorem~\ref{thm:stable-rate} then yields an $L^p$ strong error bound of order
$\Delta^{\beta_{h,t}}+\delta^{\min\{1/2,\beta_u\}}.$
\end{remark}

\section{Numerical experiments and applications in solving FPEs}\label{sec:numerical}

In this section, two examples are given. Example \ref{ex:chirp} displays the strong convergece rate of the proposed method. Example  \ref{ex:transport-pde} provides a case that the propsed method can be used to numerically approximate some FPEs.

We examine the strong convergence of SSTEM and compare it with the $D$-grid truncated Euler--Maruyama (DTEM) scheme and the equidistant truncated Euler--Maruyama (ETEM) scheme.
The comparison uses the same driving paths and observation nodes for all three methods at each step size.

\begin{example}\label{ex:chirp}
We consider the scalar SDE
\begin{align*}
\mathrm{d}X(t)&=\left(\left(0.20+0.03\sin(2\pi t)+0.03\frac{E(t)}{1+E(t)}\right)X(t)-X^3(t)\right)\mathrm{d}t\\
&\quad+\left(\left(0.15+0.02\cos(2\pi t)+0.02\frac{E(t)}{1+E(t)}\right)X(t)-X^5(t)\right)\mathrm{d}E(t)\\
&\quad+\left(0.25X^2(t)+0.05\frac{E(t)}{1+E(t)}+0.5\psi(t)\right)\mathrm{d}B(E(t)),
\end{align*}
with $X(0)=0.9$ and $T=2$, where
\begin{equation*}
\psi(t)=
\begin{cases}
8|t-0.5|^3\sin\bigl(|t-0.5|^{-9}\bigr), & t\ne0.5,\\
0, & t=0.5.
\end{cases}
\end{equation*}
Here $E$ is the inverse of an $\alpha$-stable subordinator with Laplace exponent $\Phi(\lambda)=\lambda^\alpha$, independent of the Brownian motion $B$.
The perturbation $\psi$ has H\"older exponent $\beta=\beta_{g,t}=0.3$.
With $\beta_{h,t}=\beta_{f,t}=\beta_u=1$ and $\ell=4$, Theorem~\ref{thm:main} gives an $L^2$ strong error bound of order $h^{0.3}$ when $\Delta=\delta=h$.
Theorem~\ref{thm:stable-rate}, applied with $\gamma=\alpha$, sharpens this bound to
\begin{equation*}
\begin{cases}
h^{1/2}, & 0<\alpha<0.6,\\
h^{1/2}\sqrt{1+|\log h|}, & \alpha=0.6,\\
h^{0.3/\alpha}, & 0.6<\alpha<1.
\end{cases}
\end{equation*}
\end{example}

For each $\alpha$, we use $M=2000$ independent Monte Carlo samples.
All methods, tested step sizes, and the reference approximation share the same fine samples of $D$ and $B$.
We set
\begin{equation*}
\delta=\Delta=h_j=2^{-j}, \qquad j=9,\ldots,15,
\end{equation*}
and use $R_0=2$, $\kappa=1/6$, and $\ell=4$, so that the truncation radius is $R_h=2h^{-1/30}$.
The truncated coefficients $h_h,f_h,g_h$ are those defined in Section~\ref{subsec:stem-scheme} with $\rho=h$.
The reference approximation $X_{\rm ref}$ is computed by SSTEM with
\begin{equation*}
\delta_{\rm ref}=2^{-20}, \qquad \Delta_{\rm ref}=2^{-18}, \qquad R_{\rm ref}=2\bigl(\max\{\Delta_{\rm ref},\delta_{\rm ref}\}\bigr)^{-1/30}.
\end{equation*}

DTEM advances on the sampled $D$-grid without substeps for the drift integrated against $t$.
ETEM uses the equidistant grid $t_k=kh$.
For each path $i$ and step size $h$, all three methods are evaluated at
\begin{equation*}
\mathcal T_h^{(i)}=\{D^{(i)}(nh):n\ge0,\ D^{(i)}(nh)\le T\}\cup\{T\}.
\end{equation*}
Writing $Y_h$ for the approximation, we compute the root-mean-square error (RMSE)
\begin{equation}\label{eq:empirical-rmse}
e(h)=\left(\frac1M\sum_{i=1}^M\max_{t\in\mathcal T_h^{(i)}}\left|Y_h^{(i)}(t)-X_{\rm ref}^{(i)}(t)\right|^2\right)^{1/2}.
\end{equation}
For ETEM, $Y_h(t)$ denotes its approximation at the observation time $t$.

For each method, the experimental order of convergence (EOC) is defined by
\begin{equation*}
\mathrm{EOC}_j=\frac{\log e(h_j)-\log e(h_{j-1})}{\log h_j-\log h_{j-1}}, \qquad j=10,\ldots,15.
\end{equation*}
Since $h_j=h_{j-1}/2$, this equals $\log_2(e(h_{j-1})/e(h_j))$.

Table~\ref{tab:chirp-rates} summarizes the SSTEM rates for six values of $\alpha$.
The numerical convergence rates are close to $1/2$ for $\alpha<0.6$ and to $0.3/\alpha$ for $\alpha>0.6$.
At $\alpha=0.6$, the numerical convergence rate is $0.4653$, while the theoretical estimate includes the factor $\sqrt{1+|\log h|}$.
Therefore, the error bound in this case includes a logarithmic factor.
\begin{table}[htbp]
\centering
\caption{Theoretical and numerical convergence rates of SSTEM for Example~\ref{ex:chirp}.}
\label{tab:chirp-rates}
\small
\setlength{\tabcolsep}{6pt}
\begin{tabular*}{\linewidth}{@{\extracolsep{\fill}}ccccccc@{}}
\toprule
$\alpha$ & $0.3$ & $0.4$ & $0.5$ & $0.6$ & $0.7$ & $0.8$\\
\midrule
General bound & $0.3000$ & $0.3000$ & $0.3000$ & $0.3000$ & $0.3000$ & $0.3000$\\
Sharpened bound & $0.5000$ & $0.5000$ & $0.5000$ & $0.5000$ & $0.4286$ & $0.3750$\\
SSTEM numerical rate & $0.4987$ & $0.4964$ & $0.4940$ & $0.4653$ & $0.4322$ & $0.3760$\\
\bottomrule
\end{tabular*}
\end{table}

Tables~\ref{tab:chirp-comparison06} and~\ref{tab:chirp-comparison08} report the errors and EOCs for the three methods.
For both values of $\alpha$, SSTEM has the smallest errors at all tested step sizes, with numerical convergence rates of $0.4653$ and $0.3760$ for $\alpha=0.6$ and $\alpha=0.8$, respectively.
For ETEM, the numerical convergence rates are $0.3346$ and $0.2594$, close to the temporal H\"older exponent $\beta=0.3$ considered in~\cite{ZuoGuo2026,ZuoGuo2026TCSDDE}.
For DTEM, the EOCs at the finest step size are $0.001$ for $\alpha=0.6$ and $0.012$ for $\alpha=0.8$, suggesting a lack of convergence consistent with the difficulty noted in~\cite{JinKobayashi2021}.

\begin{table}[htbp]
\centering
\caption{RMSE and EOC for Example~\ref{ex:chirp} with $\alpha=0.6$.}
\label{tab:chirp-comparison06}
\small
\setlength{\tabcolsep}{6pt}
\begin{tabular*}{\linewidth}{@{\extracolsep{\fill}}lrrrrrr@{}}
\toprule
& \multicolumn{2}{c}{SSTEM} & \multicolumn{2}{c}{DTEM} & \multicolumn{2}{c}{ETEM}\\
\cmidrule(lr){2-3}\cmidrule(lr){4-5}\cmidrule(lr){6-7}
$h$ & Error & EOC & Error & EOC & Error & EOC\\
\midrule
$2^{-9}$ & $0.1634$ & -- & $0.3441$ & -- & $0.5193$ & --\\
$2^{-10}$ & $0.1225$ & $0.415$ & $0.3259$ & $0.079$ & $0.3987$ & $0.381$\\
$2^{-11}$ & $0.08824$ & $0.474$ & $0.3192$ & $0.030$ & $0.3188$ & $0.322$\\
$2^{-12}$ & $0.06520$ & $0.437$ & $0.3120$ & $0.033$ & $0.2740$ & $0.219$\\
$2^{-13}$ & $0.04648$ & $0.488$ & $0.3115$ & $0.002$ & $0.2159$ & $0.344$\\
$2^{-14}$ & $0.03347$ & $0.474$ & $0.3094$ & $0.010$ & $0.1658$ & $0.380$\\
$2^{-15}$ & $0.02368$ & $0.499$ & $0.3092$ & $0.001$ & $0.1218$ & $0.445$\\
\midrule
Numerical rate & \multicolumn{2}{c}{$0.4653$} & \multicolumn{2}{c}{$0.0231$} & \multicolumn{2}{c}{$0.3346$}\\
\bottomrule
\end{tabular*}
\end{table}

\begin{table}[htbp]
\centering
\caption{RMSE and EOC for Example~\ref{ex:chirp} with $\alpha=0.8$.}
\label{tab:chirp-comparison08}
\small
\setlength{\tabcolsep}{6pt}
\begin{tabular*}{\linewidth}{@{\extracolsep{\fill}}lrrrrrr@{}}
\toprule
& \multicolumn{2}{c}{SSTEM} & \multicolumn{2}{c}{DTEM} & \multicolumn{2}{c}{ETEM}\\
\cmidrule(lr){2-3}\cmidrule(lr){4-5}\cmidrule(lr){6-7}
$h$ & Error & EOC & Error & EOC & Error & EOC\\
\midrule
$2^{-9}$  & $0.2700$  & --      & $0.3660$ & --      & $0.3612$ & --\\
$2^{-10}$ & $0.2145$  & $0.332$ & $0.3328$ & $0.137$ & $0.3132$ & $0.206$\\
$2^{-11}$ & $0.1697$  & $0.338$ & $0.3049$ & $0.126$ & $0.2645$ & $0.244$\\
$2^{-12}$ & $0.1304$  & $0.381$ & $0.2884$ & $0.080$ & $0.2326$ & $0.186$\\
$2^{-13}$ & $0.09944$ & $0.391$ & $0.2740$ & $0.074$ & $0.1909$ & $0.285$\\
$2^{-14}$ & $0.07482$ & $0.410$ & $0.2704$ & $0.019$ & $0.1524$ & $0.325$\\
$2^{-15}$ & $0.05720$ & $0.387$ & $0.2681$ & $0.012$ & $0.1215$ & $0.326$\\
\midrule
Numerical rate & \multicolumn{2}{c}{$0.3760$} & \multicolumn{2}{c}{$0.0750$} & \multicolumn{2}{c}{$0.2594$}\\
\bottomrule
\end{tabular*}
\end{table}

\begin{example}\label{ex:transport-pde}
We consider the two-dimensional \rev{SDE}
\begin{equation*}
\left\{
\begin{aligned}
\mathrm{d}X_1(t)&=\bigl(X_1(t)-X_1^3(t)+0.5X_2(t)\bigr)\mathrm{d}t-\frac{X_1^3(t)}{1+X_1^2(t)}\mathrm{d}E(t)+0.8\sqrt{1+X_1^2(t)}\mathrm{d}B_1(E(t)),\\
\mathrm{d}X_2(t)&=\bigl(X_2(t)-X_2^3(t)+0.5X_1(t)\bigr)\mathrm{d}t-\frac{X_2^3(t)}{1+X_2^2(t)}\mathrm{d}E(t)+0.8\sqrt{1+X_2^2(t)}\mathrm{d}B_2(E(t)).
\end{aligned}
\right.
\end{equation*}
with $X(0)=(0.5,0.5)^\top$.
Here $E$ is the inverse of a standard $\alpha$-stable subordinator, and $B_1,B_2$ are independent standard Brownian motions, independent of $E$.
\end{example}

Following the generalized FPE framework in~\cite{EuleFriedrichJenkoSokolov2008}, we use the memory equation
\begin{equation*}
\partial_t\mu_t=\mathcal A\mu_t+\mathcal B\mathcal M_{\mathcal A}^{1-\alpha}\mu(t), \qquad \mu_0=\delta_{(0.5,0.5)},
\end{equation*}
for the law $\mu_t=\operatorname{Law}(X(t))$, where
\begin{align*}
\mathcal A\mu&=-\sum_{i=1}^2\partial_{x_i}\bigl((x_i-x_i^3+0.5x_{3-i})\mu\bigr),\\
\mathcal B\mu&=\sum_{i=1}^2\partial_{x_i}\left(\frac{x_i^3}{1+x_i^2}\mu\right)+0.32\sum_{i=1}^2\partial_{x_ix_i}^2\bigl((1+x_i^2)\mu\bigr),\\
\mathcal M_{\mathcal A}^{1-\alpha}\mu(t)&=(\partial_t-\mathcal A)\frac{1}{\Gamma(\alpha)}\int_0^t(t-s)^{\alpha-1}\mathcal S_{t-s}\mu_s\mathrm{d}s.
\end{align*}
Here $\mathcal S_r$ is the transport semigroup generated by $\mathcal A$, and the equation holds in the sense of distributions.

We take $\alpha=0.8$ and $T=1$, and simulate $10^5$ SSTEM paths with $\Delta=\delta=2^{-12}$ and radial truncation radius $R=4\delta^{-1/24}$.
The drift integrated against $t$ is approximated using explicit Euler substeps of length at most $\Delta$.
The final substep of each completed $D$-grid interval also includes the drift and Brownian increments associated with $\delta$, with all coefficients evaluated at the same truncated left-endpoint state.
The independent PDE computation uses $1024$ time steps and a $192\times192$ spatial grid on $[-4,4]^2$ in the coordinates $u_i=\operatorname{arsinh}(x_i)$.

Figures~\ref{fig:transport-pde08} and~\ref{fig:transport-marginal08} compare the joint and marginal probability density functions (PDFs) at $t=1$, respectively.
The joint densities exhibit similar concentration patterns, while the marginal histograms closely follow the PDE curves.
Visible differences remain near the joint density peaks.

On the same machine using a single CPU thread, SSTEM required $134.12$ seconds, including the generation of the subordinator $D$, while the PDE computation required $261.24$ seconds.
When the sampled values of $D$ were precomputed and stored, the SSTEM computation took $23.77$ seconds.
These timings are reported without optimization of either implementation.

\begin{figure}[!htbp]
\centering
\includegraphics[width=.48\linewidth]{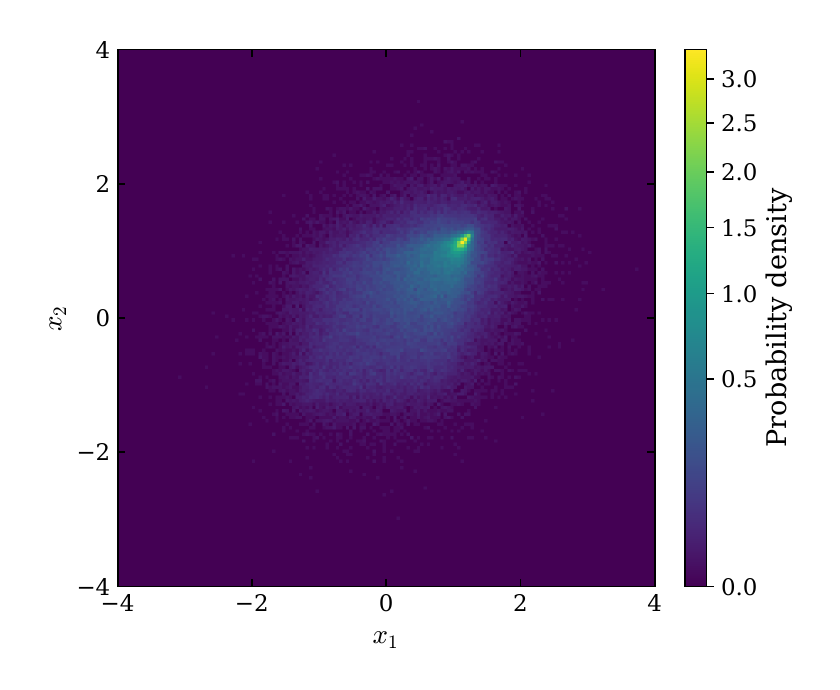}\hfill
\includegraphics[width=.48\linewidth]{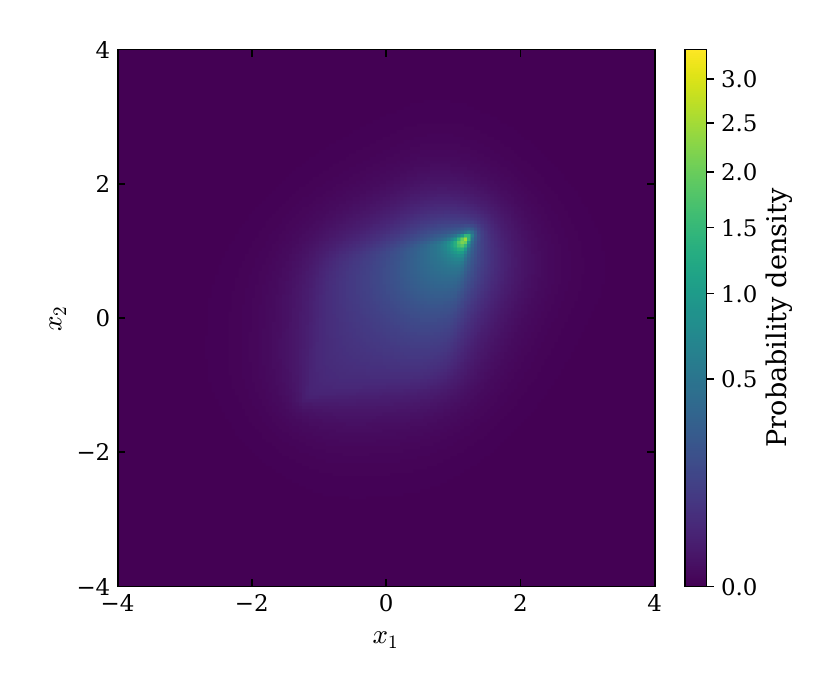}
\caption{Joint PDFs for Example~\ref{ex:transport-pde} with $\alpha=0.8$ at $t=1$.
Left: SSTEM.
Right: numerical FPE solution.
}\label{fig:transport-pde08}
\end{figure}

\begin{figure}[!htbp]
\centering
\includegraphics[width=.48\linewidth]{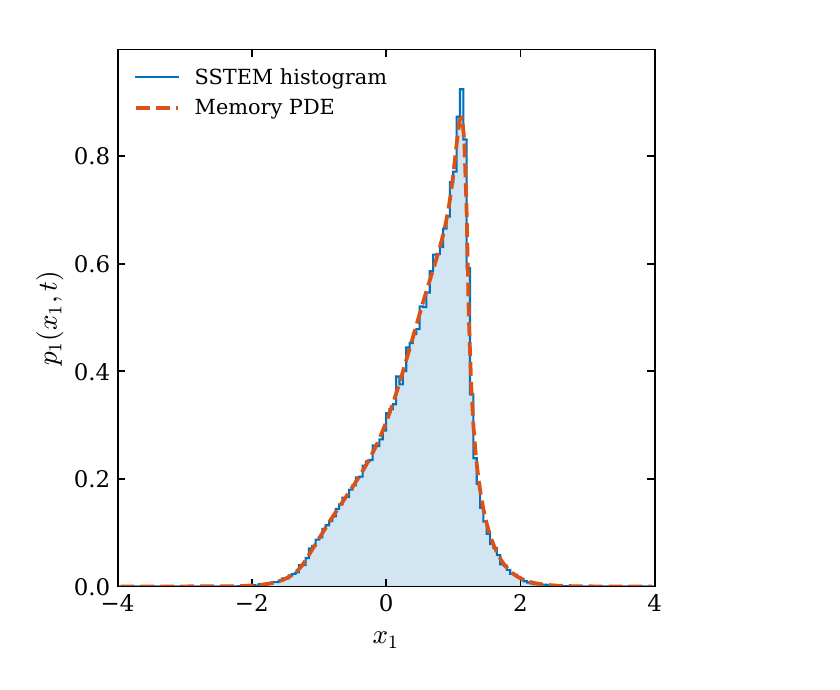}\hfill
\includegraphics[width=.48\linewidth]{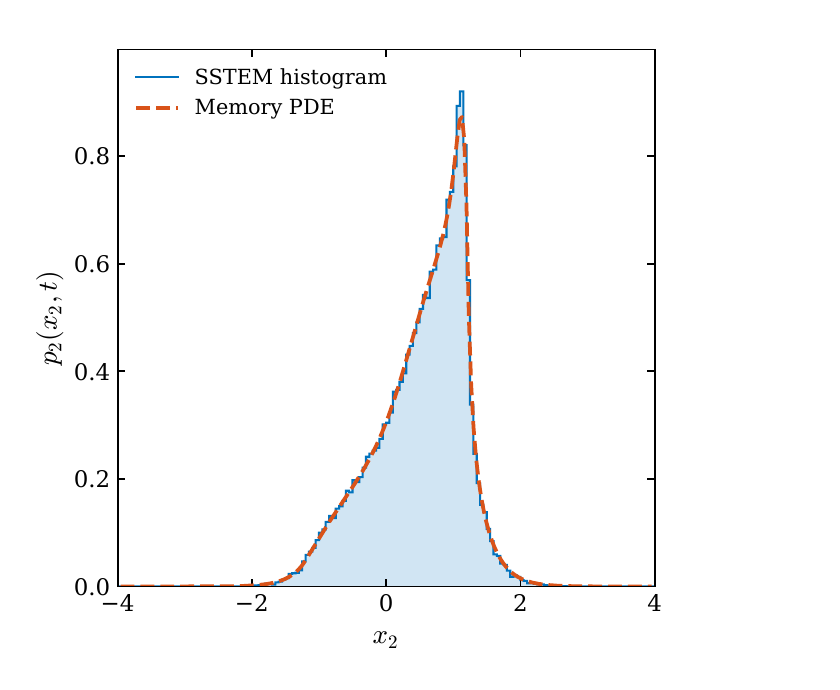}
\caption{Marginal PDFs of $X_1$ (left) and $X_2$ (right) for Example~\ref{ex:transport-pde} with $\alpha=0.8$ at $t=1$.
	The histograms and curves represent the SSTEM and numerical FPE solutions, respectively.
}\label{fig:transport-marginal08}
\end{figure}

\section{Conclusion}\label{sec:conclusion}

In this paper, we developed the SSTEM scheme for superlinear SDEs driven by subdiffusion with a state-dependent drift integrated against $t$.
The scheme combines sampled subordinator values with local sub-stepping for the drift integrated against $t$.
In the final substep of each completed subordinator interval, $h_\rho$, $f_\rho$, and $g_\rho$ are evaluated at the same state to balance their superlinear growth.
Under joint monotonicity and finite-order coercivity conditions, we established step-independent moment bounds and strong error estimates for independently chosen time and subordinator sampling steps, with sharpened rates under a power growth bound on the Laplace exponent.
The scalar example illustrated the effect of the subordinator index on the convergence rate.
For a coupled two-dimensional model, SSTEM simulations were compared with a numerical FPE solution.
Future work may investigate whether the sharpened strong convergence rates obtained under low temporal regularity are optimal.
Further directions include weak convergence analysis and the use of SSTEM to study the associated FPE.

\section*{Statements and Declarations}

\noindent\textbf{Funding.} Qian Guo and Wei Liu were supported by the National Natural Science Foundation of China under Grant No.~12271368.
\par\noindent\textbf{Competing interests.} The authors have no relevant financial or non-financial interests to disclose.
\par\noindent\textbf{Data availability.} The data generated during the current study are available from the first author upon reasonable request.
\par\noindent\textbf{Declaration of AI use.} The authors used ChatGPT to assist with English language editing and improving readability.
All suggested changes were reviewed and verified by the authors.
The authors take full responsibility for the content of this manuscript.

\begingroup
\small
\bibliographystyle{plain}
\bibliography{references}

@article{HighamMaoStuart2002,
  author = {Higham, Desmond J. and Mao, Xuerong and Stuart, Andrew M.},
  title = {Strong convergence of {Euler}-type methods for nonlinear stochastic differential equations},
  journal = {SIAM J. Numer. Anal.},
  year = {2002},
  volume = {40},
  number = {3},
  pages = {1041--1063},
  doi = {10.1137/S0036142901389530},
  note = {\url{https://doi.org/10.1137/S0036142901389530}}
}

@article{MeerschaertScheffler2004,
  author = {Meerschaert, Mark M. and Scheffler, Hans-Peter},
  title = {Limit theorems for continuous-time random walks with infinite mean waiting times},
  journal = {J. Appl. Probab.},
  year = {2004},
  volume = {41},
  number = {3},
  pages = {623--638},
  doi = {10.1239/jap/1091543414},
  note = {\url{https://doi.org/10.1239/jap/1091543414}}
}

@article{HutzenthalerJentzenKloeden2011,
  author = {Hutzenthaler, Martin and Jentzen, Arnulf and Kloeden, Peter E.},
  title = {Strong and weak divergence in finite time of {Euler}'s method for stochastic differential equations with non-globally {Lipschitz} continuous coefficients},
  journal = {Proc. R. Soc. A},
  year = {2011},
  volume = {467},
  number = {2130},
  pages = {1563--1576},
  doi = {10.1098/rspa.2010.0348},
  note = {\url{https://doi.org/10.1098/rspa.2010.0348}}
}

@article{Kobayashi2011,
  author = {Kobayashi, Kei},
  title = {Stochastic calculus for a time-changed semimartingale and the associated stochastic differential equations},
  journal = {J. Theor. Probab.},
  year = {2011},
  volume = {24},
  number = {3},
  pages = {789--820},
  doi = {10.1007/s10959-010-0320-9},
  note = {\url{https://doi.org/10.1007/s10959-010-0320-9}}
}

@article{Mao2015,
  author = {Mao, Xuerong},
  title = {The truncated {Euler--Maruyama} method for stochastic differential equations},
  journal = {J. Comput. Appl. Math.},
  year = {2015},
  volume = {290},
  pages = {370--384},
  doi = {10.1016/j.cam.2015.06.002},
  note = {\url{https://doi.org/10.1016/j.cam.2015.06.002}}
}

@article{JumKobayashi2016,
  author = {Jum, Ernest and Kobayashi, Kei},
  title = {A strong and weak approximation scheme for stochastic differential equations driven by a time-changed {Brownian} motion},
  journal = {Probab. Math. Statist.},
  year = {2016},
  volume = {36},
  number = {2},
  pages = {201--220},
  note = {\url{https://www.math.uni.wroc.pl/~pms/files/36.2/Article/36.2.2.pdf}}
}

@article{JinKobayashi2019,
  author = {Jin, Sixian and Kobayashi, Kei},
  title = {Strong approximation of stochastic differential equations driven by a time-changed {Brownian} motion with time-space-dependent coefficients},
  journal = {J. Math. Anal. Appl.},
  year = {2019},
  volume = {476},
  number = {2},
  pages = {619--636},
  doi = {10.1016/j.jmaa.2019.04.001},
  note = {\url{https://doi.org/10.1016/j.jmaa.2019.04.001}}
}

@article{DengLiu2020,
  author = {Deng, Chang-Song and Liu, Wei},
  title = {Semi-implicit {Euler--Maruyama} method for non-linear time-changed stochastic differential equations},
  journal = {BIT Numer. Math.},
  year = {2020},
  volume = {60},
  number = {4},
  pages = {1133--1151},
  doi = {10.1007/s10543-020-00810-7},
  note = {\url{https://doi.org/10.1007/s10543-020-00810-7}}
}

@article{LiuMaoTangWu2020,
  author = {Liu, Wei and Mao, Xuerong and Tang, Jingwen and Wu, Yue},
  title = {Truncated {Euler--Maruyama} method for classical and time-changed non-autonomous stochastic differential equations},
  journal = {Appl. Numer. Math.},
  year = {2020},
  volume = {153},
  pages = {66--81},
  doi = {10.1016/j.apnum.2020.02.007},
  note = {\url{https://doi.org/10.1016/j.apnum.2020.02.007}}
}

@article{JinKobayashi2021,
  author = {Jin, Sixian and Kobayashi, Kei},
  title = {Strong approximation of time-changed stochastic differential equations involving drifts with random and non-random integrators},
  journal = {BIT Numer. Math.},
  year = {2021},
  volume = {61},
  number = {3},
  pages = {829--857},
  doi = {10.1007/s10543-021-00852-5},
  note = {\url{https://doi.org/10.1007/s10543-021-00852-5}}
}

@article{LiLiuTang2025,
  author = {Li, Xiaotong and Liu, Wei and Tang, Tianjiao},
  title = {Truncated {Euler--Maruyama} method for time-changed stochastic differential equations with super-linear state variables and {H\"older}'s continuous time variables},
  journal = {J. Comput. Math.},
  year = {2025},
  volume = {43},
  number = {5},
  pages = {1194--1218},
  doi = {10.4208/jcm.2411-m2022-0061},
  note = {\url{https://doi.org/10.4208/jcm.2411-m2022-0061}}
}

@article{LiuWangZuo2025,
  author = {Liu, Wei and Wang, Meng and Zuo, Ruchun},
  title = {Parameter-related strong convergence rate of the backward {Euler--Maruyama} method for time-changed stochastic differential equations},
  journal = {Fluct. Noise Lett.},
  year = {2025},
  volume = {24},
  number = {3},
  pages = {2550036},
  doi = {10.1142/S0219477525500361},
  note = {\url{https://doi.org/10.1142/S0219477525500361}}
}

@article{Zuo2026,
  author = {Zuo, Ruchun},
  title = {Parameter-related strong convergence rates of {Euler}-type methods for time-changed stochastic differential equations},
  journal = {Numer. Algorithms},
  year = {2026},
  doi = {10.1007/s11075-026-02454-0},
  note = {\url{https://doi.org/10.1007/s11075-026-02454-0}}
}

@article{ZuoGuo2026,
  author = {Zuo, Ruchun and Guo, Qian},
  title = {An equidistant truncated {Euler--Maruyama} method for nonlinear time-changed stochastic differential equations},
  journal = {Research Square},
  year = {2026},
  doi = {10.21203/rs.3.rs-10974346/v1},
  note = {\url{https://doi.org/10.21203/rs.3.rs-10974346/v1}}
}

@misc{ZuoGuo2026TCSDDE,
  author = {Zuo, Ruchun and Guo, Qian},
  title = {Strong convergence of fully discrete tamed {Euler} approximations for time-changed stochastic delay differential equations},
  year = {2026},
  howpublished = {arXiv:2610.28552},
  doi = {10.48550/arXiv.2610.28552},
  note = {\url{https://doi.org/10.48550/arXiv.2610.28552}}
}

@article{MetzlerKlafter2000,
  author = {Metzler, Ralf and Klafter, Joseph},
  title = {The random walk's guide to anomalous diffusion: a fractional dynamics approach},
  journal = {Phys. Rep.},
  year = {2000},
  volume = {339},
  number = {1},
  pages = {1--77},
  doi = {10.1016/S0370-1573(00)00070-3},
  note = {\url{https://doi.org/10.1016/S0370-1573(00)00070-3}}
}

@article{MeerschaertStraka2013,
  author = {Meerschaert, Mark M. and Straka, Peter},
  title = {Inverse stable subordinators},
  journal = {Math. Model. Nat. Phenom.},
  year = {2013},
  volume = {8},
  number = {2},
  pages = {1--16},
  doi = {10.1051/mmnp/20138201},
  note = {\url{https://doi.org/10.1051/mmnp/20138201}}
}

@article{MagdziarzWeronWeron2007,
  author = {Magdziarz, Marcin and Weron, Aleksander and Weron, Karina},
  title = {Fractional {Fokker--Planck} dynamics: stochastic representation and computer simulation},
  journal = {Phys. Rev. E},
  year = {2007},
  volume = {75},
  number = {1},
  pages = {016708},
  doi = {10.1103/PhysRevE.75.016708},
  note = {\url{https://doi.org/10.1103/PhysRevE.75.016708}}
}

@article{Magdziarz2009Langevin,
  author = {Magdziarz, Marcin},
  title = {{Langevin} picture of subdiffusion with infinitely divisible waiting times},
  journal = {J. Stat. Phys.},
  year = {2009},
  volume = {135},
  number = {4},
  pages = {763--772},
  doi = {10.1007/s10955-009-9751-z},
  note = {\url{https://doi.org/10.1007/s10955-009-9751-z}}
}

@article{ChenWangDeng2019,
  author = {Chen, Yao and Wang, Xudong and Deng, Weihua},
  title = {Subdiffusion in an external force field},
  journal = {Phys. Rev. E},
  year = {2019},
  volume = {99},
  number = {4},
  pages = {042125},
  doi = {10.1103/PhysRevE.99.042125},
  note = {\url{https://doi.org/10.1103/PhysRevE.99.042125}}
}

@article{CarnaffanKawai2017,
  author = {Carnaffan, Sean and Kawai, Reiichiro},
  title = {Solving multidimensional fractional {Fokker--Planck} equations via unbiased density formulas for anomalous diffusion processes},
  journal = {SIAM J. Sci. Comput.},
  year = {2017},
  volume = {39},
  number = {5},
  pages = {B886--B915},
  doi = {10.1137/17M111482X},
  note = {\url{https://doi.org/10.1137/17M111482X}}
}

@article{WuZuo2026,
  author = {Wu, Ruoxue and Zuo, Ruchun},
  title = {A {Milstein}-type method for highly non-linear non-autonomous time-changed stochastic differential equations},
  journal = {Appl. Numer. Math.},
  year = {2026},
  volume = {230},
  pages = {209--226},
  doi = {10.1016/j.apnum.2026.07.013},
  note = {\url{https://doi.org/10.1016/j.apnum.2026.07.013}}
}

@article{GrabchakSaba2025,
  author = {Grabchak, Michael and Saba, Sina},
  title = {On approximations of subordinators in {$L^p$} and the simulation of tempered stable distributions},
  journal = {Stat. Comput.},
  year = {2025},
  volume = {35},
  pages = {54},
  doi = {10.1007/s11222-025-10586-x},
  note = {\url{https://doi.org/10.1007/s11222-025-10586-x}}
}

@article{HutzenthalerJentzenKloeden2012,
  author = {Hutzenthaler, Martin and Jentzen, Arnulf and Kloeden, Peter E.},
  title = {Strong convergence of an explicit numerical method for {SDEs} with nonglobally {Lipschitz} continuous coefficients},
  journal = {Ann. Appl. Probab.},
  year = {2012},
  volume = {22},
  number = {4},
  pages = {1611--1641},
  doi = {10.1214/11-AAP803},
  note = {\url{https://doi.org/10.1214/11-AAP803}}
}

@article{Sabanis2016,
  author = {Sabanis, Sotirios},
  title = {{Euler} approximations with varying coefficients: the case of superlinearly growing diffusion coefficients},
  journal = {Ann. Appl. Probab.},
  year = {2016},
  volume = {26},
  number = {4},
  pages = {2083--2105},
  doi = {10.1214/15-AAP1140},
  note = {\url{https://doi.org/10.1214/15-AAP1140}}
}

@article{Mao2016,
  author = {Mao, Xuerong},
  title = {Convergence rates of the truncated {Euler--Maruyama} method for stochastic differential equations},
  journal = {J. Comput. Appl. Math.},
  year = {2016},
  volume = {296},
  pages = {362--375},
  doi = {10.1016/j.cam.2015.09.035},
  note = {\url{https://doi.org/10.1016/j.cam.2015.09.035}}
}

@article{LiMaoYin2019,
  author = {Li, Xiaoyue and Mao, Xuerong and Yin, George},
  title = {Explicit numerical approximations for stochastic differential equations in finite and infinite horizons: truncation methods, convergence in {$p$}th moment, and stability},
  journal = {IMA J. Numer. Anal.},
  year = {2019},
  volume = {39},
  number = {2},
  pages = {847--892},
  doi = {10.1093/imanum/dry015},
  note = {\url{https://doi.org/10.1093/imanum/dry015}}
}

@article{Lageras2005,
  author = {Lager{\aa}s, Andreas Nordvall},
  title = {A renewal-process-type expression for the moments of inverse subordinators},
  journal = {J. Appl. Probab.},
  year = {2005},
  volume = {42},
  number = {4},
  pages = {1134--1144},
  doi = {10.1239/jap/1134587822},
  note = {\url{https://doi.org/10.1239/jap/1134587822}}
}

@article{MagdziarzSchilling2015,
  author = {Magdziarz, Marcin and Schilling, Ren\'e L.},
  title = {Asymptotic properties of {Brownian} motion delayed by inverse subordinators},
  journal = {Proc. Am. Math. Soc.},
  year = {2015},
  volume = {143},
  number = {10},
  pages = {4485--4501},
  doi = {10.1090/proc/12588},
  note = {\url{https://doi.org/10.1090/proc/12588}}
}

@article{KumarVellaisamy2015,
  author = {Kumar, A. and Vellaisamy, P.},
  title = {Inverse tempered stable subordinators},
  journal = {Stat. Probab. Lett.},
  year = {2015},
  volume = {103},
  pages = {134--141},
  doi = {10.1016/j.spl.2015.04.010},
  note = {\url{https://doi.org/10.1016/j.spl.2015.04.010}}
}

@misc{WangNiuZhang2026,
  author = {Wang, Shuai and Niu, Yuanling and Zhang, Ying},
  title = {{$1/2$} order convergence rate of {Euler}-type methods for time-changed stochastic differential equations with super-linearly growing drift and diffusion coefficients},
  year = {2026},
  howpublished = {arXiv:2507.14562v4},
  doi = {10.48550/arXiv.2507.14562},
  note = {\url{https://arxiv.org/abs/2507.14562v4}}
}

@article{ColantoniDOvidio2023,
  author = {Colantoni, Fausto and D'Ovidio, Mirko},
  title = {On the inverse gamma subordinator},
  journal = {Stoch. Anal. Appl.},
  year = {2023},
  volume = {41},
  number = {5},
  pages = {999--1024},
  doi = {10.1080/07362994.2022.2108450},
  note = {\url{https://doi.org/10.1080/07362994.2022.2108450}}
}

@article{Magdziarz2009TimeDependent,
  author = {Magdziarz, Marcin},
  title = {Stochastic representation of subdiffusion processes with time-dependent drift},
  journal = {Stochastic Process. Appl.},
  year = {2009},
  volume = {119},
  number = {10},
  pages = {3238--3252},
  doi = {10.1016/j.spa.2009.05.006},
  note = {\url{https://doi.org/10.1016/j.spa.2009.05.006}}
}

@article{ChenLiuWu2026,
  author = {Chen, Ziheng and Liu, Jiao and Wu, Anxin},
  title = {Strong and weak convergence orders of numerical methods for {SDEs} driven by time-changed {L\'evy} noise},
  journal = {Appl. Numer. Math.},
  year = {2026},
  volume = {221},
  pages = {46--62},
  doi = {10.1016/j.apnum.2025.11.007},
  note = {\url{https://doi.org/10.1016/j.apnum.2025.11.007}}
}

@article{WenLiXu2023,
  author = {Wen, Xueqi and Li, Zhi and Xu, Liping},
  title = {Strong approximation of non-autonomous time-changed {McKean--Vlasov} stochastic differential equations},
  journal = {Commun. Nonlinear Sci. Numer. Simul.},
  year = {2023},
  volume = {119},
  pages = {107122},
  doi = {10.1016/j.cnsns.2023.107122},
  note = {\url{https://doi.org/10.1016/j.cnsns.2023.107122}}
}

@article{EuleFriedrich2009,
  author = {Eule, S. and Friedrich, R.},
  title = {Subordinated {Langevin} equations for anomalous diffusion in external potentials---biasing and decoupled external forces},
  journal = {EPL},
  year = {2009},
  volume = {86},
  number = {3},
  pages = {30008},
  doi = {10.1209/0295-5075/86/30008},
  note = {\url{https://doi.org/10.1209/0295-5075/86/30008}}
}

@article{CairoliBaule2015,
  author = {Cairoli, Andrea and Baule, Adrian},
  title = {{Langevin} formulation of a subdiffusive continuous time random walk in physical time},
  journal = {Phys. Rev. E},
  year = {2015},
  volume = {92},
  number = {1},
  pages = {012102},
  doi = {10.1103/PhysRevE.92.012102},
  note = {\url{https://doi.org/10.1103/PhysRevE.92.012102}}
}

@misc{zhang2025subdiffusive,
  author = {Zhang, Shuaiqi and Chen, Zhen-Qing},
  title = {Sub-diffusive {Black--Scholes} model and {Girsanov} transform for sub-diffusions},
  year = {2025},
  eprint = {2511.10371},
  archivePrefix = {arXiv},
  primaryClass = {math.PR},
  howpublished = {arXiv:2511.10371},
  url = {https://arxiv.org/abs/2511.10371},
  note = {\url{https://arxiv.org/abs/2511.10371}}
}

@article{MagdziarzWeronKlafter2008,
  author = {Magdziarz, Marcin and Weron, Aleksander and Klafter, Joseph},
  title = {Equivalence of the fractional {Fokker--Planck} and subordinated {Langevin} equations: the case of a time-dependent force},
  journal = {Phys. Rev. Lett.},
  year = {2008},
  volume = {101},
  number = {21},
  pages = {210601},
  doi = {10.1103/PhysRevLett.101.210601},
  note = {\url{https://doi.org/10.1103/PhysRevLett.101.210601}}
}

@article{EuleFriedrichJenkoSokolov2008,
  author = {Eule, S. and Friedrich, R. and Jenko, F. and Sokolov, I. M.},
  title = {Continuous-time random walks with internal dynamics and subdiffusive reaction-diffusion equations},
  journal = {Phys. Rev. E},
  year = {2008},
  volume = {78},
  number = {6},
  pages = {060102},
  doi = {10.1103/PhysRevE.78.060102},
  note = {\url{https://doi.org/10.1103/PhysRevE.78.060102}}
}
\endgroup
	
\end{document}